\documentclass{amsart}
\usepackage{amsmath,amssymb,amsthm, epsfig}

\usepackage{latexsym,amssymb}
\usepackage{amsmath,amsthm}

\usepackage{amsfonts}
\usepackage{graphicx}
\usepackage{color}

\usepackage{hyperref}

\usepackage{amsmath}

\usepackage{amssymb}

\usepackage{ulem}

\usepackage[mathscr]{eucal}

\def\dist{\mathop{\text{\normalfont dist}}}
\def\div{\operatorname {\text{div}}}

\newcommand{\R}{\mathbb{R}}

\newcommand{\N}{\mathbb{N}}

\newcommand{\ve}{\varepsilon}

\def\S{{\mathcal {S}_{G,H}}}
\def\R{{\mathbb {R}}}
\def\N{{\mathbb {N}}}
\def \div {\mathrm{div}}

\def \Leb {\mathscr{L}^N}

\newtheorem{theorem}{Theorem}[section]
\newtheorem{lemma}[theorem]{Lemma}
\newtheorem{proposition}[theorem]{Proposition}
\newtheorem{corollary}[theorem]{Corollary}
\theoremstyle{definition}

\newtheorem{definition}[theorem]{Definition}
\newtheorem{example}[theorem]{Example}
\theoremstyle{remark}
\newtheorem{remark}[theorem]{Remark}
\numberwithin{equation}{section}

\parskip 5pt

\begin{document}

\title[Shape optimization in Orlicz-Sobolev spaces]{A constrained shape optimization problem in Orlicz-Sobolev spaces}

\author[J.V. da Silva, A.M. Salort, A. Silva, J.F. Spedaletti]{Jo\~{a}o Vitor da Silva, \,\,\,Ariel M. Salort, \,\,\, Anal\'ia Silva\\ $\&$\\ Juan F. Spedaletti}
\address[J.V da Silva]{Departamento de Matem\'atica FCEyN - Universidad de Buenos Aires and IMAS - CONICET. Ciudad Universitaria, Pabell\'on I (C1428EGA)
Av. Cantilo 2160. Buenos Aires, Argentina.}

\email{jdasilva@dm.uba.ar}
\urladdr{https://www.researchgate.net/profile/Joao\_Da\_Silva12}

\address[A.M. Salort]{Departamento de Matem\'atica FCEyN - Universidad de Buenos Aires and IMAS - CONICET. Ciudad Universitaria, Pabell\'on I (C1428EGA)
Av. Cantilo 2160. Buenos Aires, Argentina.}

\email{asalort@dm.uba.ar} \urladdr{http://mate.dm.uba.ar/~asalort/}
\address[A. Silva]{Departamento de Matem\'atica, Universidad Nacional de San Luis and IMASL - CONICET. Ej\'ercito de los Andes 950 (D5700HHW), San Luis, Argentina.}

\email{acsilva@unsl.edu.ar}
\urladdr{https://analiasilva.weebly.com}

\address[J.F. Spedaletti]{Departamento de Matem\'atica, Universidad Nacional de San Luis and IMASL - CONICET. Ej\'ercito de los Andes 950 (D5700HHW), San Luis, Argentina.}

\email{jfspedaletti@unsl.edu.ar}

\subjclass[2010]{ 35J60, 35J66, 35Q93, 46E30}

\keywords{Nonlinear partial differential equations, Orlicz-Sobolev spaces, Shape optimization problems.}

\begin{abstract} In this manuscript we study the following  optimization problem: given a bounded and regular domain $\Omega \subset \R^N$ we look for an optimal shape for the ``$\mathrm{W}-$vanishing window'' on the boundary with prescribed measure over all admissible profiles in the framework of the Orlicz-Sobolev spaces associated to constant for the ``Sobolev trace embedding''. In this direction, we establish existence of minimizer profiles and optimal sets, as well as we obtain further properties for such extremals. Finally, we also place special emphasis on analyzing the corresponding optimization problem involving an ``$\mathrm{A}-$vanishing hole'' (inside the domain) with volume constraint.
\end{abstract}

\maketitle


\section{Introduction}

\subsection{A brief historic overview}
Shape optimization problems constitute an important landmark concerning the modern development of the mathematical theory of optimization. Such issues are a longstanding subject of investigation, and currently they yet deserve increased attention by the academic community due to their intrinsic connection with several pivotal questions coming from pure and applied sciences. Some enlightening examples of such issues appear in eigenvalue problems with geometric constraints, optimization problems with constrained perimeter or volume, optimal design problems, problems in structural optimization, free boundary optimization problems, just to mention a few.

Heuristically, a shape optimization problem can be mathematically written as follows:
$$
   \min \left\{\mathcal{J}(\mathcal{O}): \mathcal{O} \subset \Omega \,\,\,\text{with}\,\,\,\mathcal{O}\,\,\,\text{fulfilling a certain property}\,\,\,\mathbb{P}\right\},
$$
where $\Omega \subset \R^N$ is a bounded open set, $\mathcal{O}$ is an \textit{a priori} unknown configuration (in general satisfying a specific property related to some constraint) and $\mathcal{J}$ is a ``cost functional'', which in several situations has an explicit integral representation, whose link with the competing configuration $\mathcal{O}$ arises via a solution of a PDE (cf. \cite{BucBut}, \cite{HerPier} and \cite{SZ} for nice surveys with a number of illustrative examples, we also recommend the reading of \cite{AAC}, \cite{daSDelPR}, \cite{daSilRos}, \cite{BMW}, \cite{BondRosSped}, \cite{BondSped2}, \cite{BRW1},  \cite{Lederman96}, \cite{Mart}, \cite{OliTei}  and \cite{Tei3} for other references with regard to free boundary and shape optimization problems).

In the scope of the modern Analysis and PDE's theory, the \textit{Sobolev Trace Embedding Theorem}, namely
 $$
    W^{1, p}(\Omega) \hookrightarrow L^q(\partial \Omega)
 $$
 with the associated estimate for a constant $\mathcal{S}>0$ (\textit{Sobolev trace constant})
 $$
    \mathcal{S}\|u\|^p_{L^{q}(\partial \Omega)} \leq \|u\|^p_{W^{1, p}(\Omega)} \quad (\text{Sobolev trace inequality})
 $$
 constitutes a fundamental tool in order to study certain issues in mathematics such as eigenvalue and Steklov type problems, functional type inequalities, existence and solvability of boundary-value problems among others.

 Historically, optimization problems associated to the \textit{best constant for Sobolev trace embedding}, namely
 $$
   \mathcal{S}_{p, q} = \inf \left\{\frac{\int_{\Omega} |\nabla u|^p+ |u|^pdx}{\left(\int_{\partial  \Omega} |u|^q d \mathcal{H}^{N-1}\right)^{\frac{p}{q}}}: u \in W^{1, p}(\Omega)\setminus W_0^{1, p}(\Omega)\right\},
 $$
have received a warm attention by several authors in the last decades. The list of contributions is fairly diverse including aspects such as uniform bounds, symmetry properties, asymptotic behavior, local regularity of extremals and their free boundaries, approximations and many others (cf. \cite{Cianc}, \cite{DelPBN}, \cite{BLDR},  \cite{BOR2}, \cite{BRS1},  \cite{BRW1}, \cite{BRW2}, \cite{BS}, \cite{Rossi03} and \cite{Rossi05} for more details). Particularly, we must highlight that in \cite{DelPBN}, \cite{Den99}, \cite{BOR2}, \cite{BRS1} and \cite{BRW2}  the authors studied for the $p-$Laplacian operator the problem of finding an optimal hole/window into the domain (resp. on the boundary) with prescribed measure associate to best constant for the Sobolev trace embedding. More precisely, they analyze the following quantity:
\begin{equation}\tag{{\bf S.E.C.}}\label{eqpSEC}
   \displaystyle \mathcal{S}(\Gamma) = \inf_{u \in \mathrm{X}_{\Gamma}} \frac{\int_{\Omega} |\nabla u|^p+ |u|^pdx}{\left(\int_{\partial  \Omega} |u|^q d \mathcal{H}^{N-1}\right)^{\frac{p}{q}}},
\end{equation}
where $1\leq q < p_{\star} = \frac{p(N-1)}{N-p}$ (the critical exponent in the Sobolev trace embedding) and
 $$
   \mathrm{X}_{\Gamma} = \left\{u \in W^{1, p}(\Omega) \setminus W^{1,
   p}_0(\Omega);\,\,u=0 \,\,\Leb-\text{a.e.}\,\, \text{in}\,\,\Gamma\,\,\,(\text{resp.} \,\,\mathcal{H}^{N-1}\,\, \text{a.e.\,\,on}\,\,\,\partial \Gamma)\right\},
 $$
where $\Leb$ (resp. $\mathcal{H}^{N-1}$) stands for the $N-$dimensional Lebesgue measure (resp. $(N-1)-$dimensional Hausdorff measure). Furthermore, another important issue in these works regards to the following shape optimization problem: for any fixed $0<\alpha<1$ the optimization problem
{\small{
 $$
   \mathcal{S}(\alpha) = \inf\left\{\mathcal{S}(\Gamma): \Gamma \subset \Omega \,\,\,(\text{resp.} \,\, \Gamma \subset \partial \Omega)\,\text{s.t.}\, \frac{\Leb(\Gamma)}{\Leb(\Omega)}=\alpha \left(\text{resp.}\,\,\frac{\mathcal{H}^{N-1}(\Gamma)}{\mathcal{H}^{N-1}(\partial\Omega)}=\alpha \right)\right\}
 $$}}
is achieved by a pair $(u_0, \Gamma_0)$ (an existence result). Moreover, under suitable regularity assumptions on the boundary, they obtain that $\Gamma_0 = \{u=0\}$ (an explicit characterization result).

In the same way that in the classical Sobolev spaces, such trace embedding also plays a significant role in more general contexts governed by spaces with non-standard growth, for which naturally we can quote the well-known \textit{Orlicz-Sobolev spaces} (cf. \cite{Adams77}, \cite{Cianchi96}, \cite{Cianc} and \cite{DonTrun} for such subjects). Such spaces extend the classical notion of Sobolev spaces to a context with non-power nonlinearities (cf. \cite{BirOrl} and \cite{Orl}), and currently such spaces are fully understood and studied in Analysis, PDE's, Free boundary problems, etc (cf. \cite{AdamsFour}, \cite{FucJohnKuf}, \cite{KrasnRut} and \cite{RaoRen} for some surveys, and \cite{BondSal}, \cite{Lieberman}, \cite{Mart}, \cite{MW} and \cite{Mon} for some applications in PDE's theory).

According to our knowledge, up to the date, there is no research concerning such optimization problems \eqref{eqpSEC} in general sceneries with non-standard growth. For this very reason, such lack of investigations was one of our main starting points in considering shape optimization problems associate to the Sobolev trace embedding in the framework of Orlicz-Sobolev spaces.

\subsection{Statement of main results}

For the functional implementations in this article, we define the
\textit{Orlicz-Sobolev embedding constant} as follows:
\begin{equation}\tag{{\bf O.S.E.C.}}\label{eqSEC}
\displaystyle S_{G,H}:=\inf_{u \in X} \frac{\int_\Omega G(|\nabla u|)+\ G(|u|)dx}{\int_{\partial  \Omega} H(|u|) d \mathcal{H}^{N-1}},
\end{equation}
where $X=W^{1,G}(\Omega)\setminus W_0^{1,G}(\Omega)$ is an admissible functional class defined under Orlicz-Sobolev spaces, and $G$ and $H$ (throughout this manuscript) are suitable Young functions, both to be clarified \textit{a posteriori}, see Section \ref{Prelim} for more details. It is worth highlighting that such a quantity \eqref{eqSEC} is linked to some extent with the compact trace embedding
$$
   W^{1, G}(\Omega) \hookrightarrow L^H(\partial \Omega),
$$
where $H$ and $G$ fulfill a certain compatibility condition, see Theorem \ref{compacidad} for   details.

Different from the Rayleigh quotient in \eqref{eqpSEC}, our
definition employs an inhomogeneous quotient. This imposes an extra
difficulty in our problem, which will be overcame by asking a
``normalization'' of boundary term  (the associated modular),
and then, we consider the class of admissible functions subject to
such a constraint. Furthermore, we point out that different from its
$p-$power counterpart (cf. \cite{DelPBN}, \cite{BRS1} and
\cite{BRW2}), this version involves further extensions and
difficulties that are treated and resolved throughout this
manuscript.

In face of the previous statements, the first purpose of our manuscript consists in analyzing the shape optimization problem related to the ``analogue'' trace embedding constant associated to the Orlicz-Sobolev spaces. In this direction, we consider a regular and bounded domain $\Omega\subset \R^N$ and a subset of the boundary $\mathrm{W}\subset \partial \Omega$  (a ``window'') such that $\mathrm{W}\neq \partial \Omega$. Thus, we define the minimization problem
\begin{equation}\tag{{\bf Min}}\label{eqMin}
S_{G,H}(\mathrm{W}):=\inf\left\{\int_\Omega G(|\nabla u|)+ G(|u|)dx\colon\int_{\partial  \Omega} H(|u|) d \mathcal{H}^{N-1}=1\right\},
\end{equation}
where the infimum is taken over the set
$$
   \mathrm{X}_{\mathrm{W}}:=\{u\in \mathrm{X}\colon u=0\,\,\, \mathcal{H}^{N-1}-\text{a.e. on } \mathrm{W}\}.
$$
In our researches, the constant $S_{G,H}(\mathrm{W})$ represents the counterpart for the first \textit{H-Steklov eigenvalue} of the ``$G-$Laplacian operator'', which is defined as
$$
   \Delta_G u : = \div\left(\frac{G^{\prime}(|\nabla u|)}{|\nabla u|}\cdot \nabla u\right).
$$
Furthermore, notice that if $G(t)=H(t) = \frac{1}{p}t^{p}$ for $p>1$, then we fall into the well-known case of the Steklov eigenvalue for the $p-$Laplacian operator.

Next, let $0<\alpha<\mathcal{H}^{N-1}(\partial \Omega)$ be a fixed constant. Taking into account \eqref{eqMin}, we define the following shape optimization problem:
\begin{equation}\tag{{\bf $\alpha-$Window}}\label{shapeOrlicz}
  \S(\alpha):=\inf\{S_{G,H}(W)\colon W\subset \partial\Omega\text{ and }\mathcal{H}^{N-1}(W)=\alpha\}.
\end{equation}
In this framework, a set $W\subset \partial\Omega$ in which the above infimum is achieved so-called \textit{optimal window} for the constant $\S(\alpha)$.

Our first result provides the existence of minimizers and optimal shapes for our optimization problem, with a lower bound estimate for the null set of minimizers.

\begin{theorem}\label{Thm1}
Let $0<\alpha<\mathcal{H}^{N-1}(\partial \Omega)$ be a fixed quantity. Then
\begin{enumerate}
\item\label{Stat1} There exists a function $u_0\in \mathrm{X}$ with $
\mathcal{H}^{N-1}(\{u_0=0\})\geq\alpha
$
such that
$$
\S(\alpha)=\int_\Omega G(|\nabla u_0|)+ G(|u_0|)dx.
$$
\item\label{Stat2} There exists a set $\mathrm{W}_0\subset\partial\Omega$ such that
$$
\mathcal{H}^{N-1}(\mathrm{W}_0)=\alpha \quad \text{and} \quad \S(\alpha) = S_{G,H}(\mathrm{W}_0).
$$

\end{enumerate}
\end{theorem}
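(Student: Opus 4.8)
The plan is to recast the nested infimum defining $\S(\alpha)$ as a single variational problem posed over functions, and then to run the direct method of the calculus of variations; the only genuinely delicate point will be the persistence of the zero-set constraint under weak limits. First I would establish the reformulation
$$
\S(\alpha)=\inf\left\{\int_\Omega G(|\nabla u|)+G(|u|)\,dx\colon u\in\mathrm{X},\ \int_{\partial\Omega}H(|u|)\,d\mathcal H^{N-1}=1,\ \mathcal H^{N-1}(\{u=0\}\cap\partial\Omega)\geq\alpha\right\}.
$$
The inequality ``$\leq$'' is immediate: any admissible pair $(\mathrm{W},u)$ for \eqref{shapeOrlicz}--\eqref{eqMin} satisfies $\{u=0\}\supseteq\mathrm{W}$, so $u$ competes on the right-hand side. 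For ``$\geq$'', given any $u$ with $\mathcal H^{N-1}(\{u=0\}\cap\partial\Omega)\geq\alpha$, I would select, using that $\mathcal H^{N-1}$ is non-atomic on $\partial\Omega$, a measurable subset $\mathrm{W}\subset\{u=0\}\cap\partial\Omega$ with $\mathcal H^{N-1}(\mathrm{W})=\alpha$; then $u\in\mathrm{X}_{\mathrm{W}}$ is admissible for $S_{G,H}(\mathrm{W})$. This identity shows that both assertions of the theorem will follow once a single minimizer $u_0$ of the reformulated problem is produced.

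Next I would take a minimizing sequence $(u_n)$ for the reformulated problem. Since the energies $\int_\Omega G(|\nabla u_n|)+G(|u_n|)\,dx$ are bounded, the structural hypotheses on $G$ (see Section \ref{Prelim}) bound $(u_n)$ in $W^{1,G}(\Omega)$, and by reflexivity I extract a subsequence with $u_n\rightharpoonup u_0$ weakly in $W^{1,G}(\Omega)$. The compact trace embedding of Theorem \ref{compacidad} upgrades this to strong convergence $u_n\to u_0$ in $L^H(\partial\Omega)$, whence, along a further subsequence, $u_n\to u_0$ $\mathcal H^{N-1}$-a.e. on $\partial\Omega$; continuity of the boundary modular then preserves the constraint $\int_{\partial\Omega}H(|u_0|)\,d\mathcal H^{N-1}=1$, so in particular $u_0$ has nonzero trace and hence $u_0\in\mathrm{X}$. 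Weak lower semicontinuity of the convex modular energy gives
$$
\int_\Omega G(|\nabla u_0|)+G(|u_0|)\,dx\ \leq\ \liminf_{n\to\infty}\int_\Omega G(|\nabla u_n|)+G(|u_n|)\,dx\ =\ \S(\alpha).
$$

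The main obstacle is to verify that the zero-set constraint survives in the limit, i.e. $\mathcal H^{N-1}(\{u_0=0\}\cap\partial\Omega)\geq\alpha$, since the map $u\mapsto\mathcal H^{N-1}(\{u=0\})$ is not continuous under weak convergence. Here I would exploit the a.e. convergence on $\partial\Omega$ as follows. Writing $Z_n=\{u_n=0\}\cap\partial\Omega$ and $Z=\{u_0=0\}\cap\partial\Omega$, at $\mathcal H^{N-1}$-a.e. point of $\partial\Omega\setminus Z$ one has $u_0\neq 0$ and $u_n\to u_0$, so $\chi_{Z_n}\to 0$ there; dominated convergence on the finite-measure set $\partial\Omega\setminus Z$ yields $\mathcal H^{N-1}(Z_n\setminus Z)\to 0$. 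Combining this with $\mathcal H^{N-1}(Z_n)\geq\alpha$ and $\mathcal H^{N-1}(Z_n)\leq\mathcal H^{N-1}(Z)+\mathcal H^{N-1}(Z_n\setminus Z)$, and letting $n\to\infty$, gives $\mathcal H^{N-1}(Z)\geq\alpha$, as required. This settles statement \eqref{Stat1}. Finally, for \eqref{Stat2} I would choose any $\mathrm{W}_0\subset\{u_0=0\}\cap\partial\Omega$ with $\mathcal H^{N-1}(\mathrm{W}_0)=\alpha$; then $u_0\in\mathrm{X}_{\mathrm{W}_0}$ is admissible for $S_{G,H}(\mathrm{W}_0)$, so that $S_{G,H}(\mathrm{W}_0)\leq\int_\Omega G(|\nabla u_0|)+G(|u_0|)\,dx=\S(\alpha)\leq S_{G,H}(\mathrm{W}_0)$, forcing equality throughout and exhibiting $\mathrm{W}_0$ as the optimal window.
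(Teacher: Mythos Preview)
Your proposal is correct and follows essentially the same route as the paper: the authors first isolate your reformulation as a separate characterization lemma (Lemma~\ref{lemma.caract}) and then run exactly the direct-method argument you describe, deducing \eqref{Stat2} from \eqref{Stat1} by selecting a subset of the zero set with measure $\alpha$. Your treatment of the persistence of the zero-set constraint is in fact more explicit than the paper's, which simply asserts $\mathcal H^{N-1}(\{u_0=0\})\geq\limsup_k\mathcal H^{N-1}(\{v_k=0\})$ from the a.e.\ boundary convergence without spelling out the dominated-convergence step you provide.
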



In contrast with previous result, now we establish that minimizers have an $\alpha-$sharp measure provided we assume enough regularity on the boundary.

\begin{theorem}\label{ThmCharac}
Suppose that the assumptions from Theorem \ref{Thm1} are in force. If $\partial \Omega$ is  an enough regular set, let us say $C^{1,\beta}$, then $\mathcal{H}^{N-1}(\{u_0=0\})=\alpha$.
\end{theorem}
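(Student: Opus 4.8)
The plan is to argue by contradiction, upgrading the lower bound $\mathcal{H}^{N-1}(\{u_0=0\})\ge\alpha$ furnished by Theorem \ref{Thm1} to an equality by showing that the trace of $u_0$ cannot vanish on any positive-$\mathcal{H}^{N-1}$-measure subset of the \emph{free} part of the boundary. Since $G$ and $H$ depend only on the modulus, I may replace $u_0$ by $|u_0|$ and assume $u_0\ge 0$. Let $\mathrm{W}_0$ denote the optimal window provided by Theorem \ref{Thm1} (so $\mathcal{H}^{N-1}(\mathrm{W}_0)=\alpha$ and $u_0=0$ on $\mathrm{W}_0$). The first step is to record the Euler--Lagrange system satisfied by the constrained minimizer: there is a Lagrange multiplier $\lambda$ with
\[
-\Delta_G u_0 + G'(u_0)=0\ \text{ in }\ \Omega,
\qquad
\frac{G'(|\nabla u_0|)}{|\nabla u_0|}\,\frac{\partial u_0}{\partial \nu}=\lambda\,H'(u_0)\ \text{ on }\ \partial\Omega\setminus \mathrm{W}_0,
\]
together with $u_0=0$ on $\mathrm{W}_0$, where $\nu$ is the outer unit normal. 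Testing the weak formulation with $u_0$ itself (admissible, as $u_0$ vanishes on $\mathrm{W}_0$) and using that $u_0\not\equiv 0$ on $\partial\Omega$ (by the normalization $\int_{\partial\Omega}H(|u_0|)\,d\mathcal{H}^{N-1}=1$) shows $\lambda>0$.

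Next I would establish two qualitative facts about $u_0$. First, by the strong maximum principle for the $G$-Laplacian (in the V\'azquez / Pucci--Serrin form, valid under the structural hypotheses on $G$ of Section \ref{Prelim}), since $u_0\ge 0$ solves the homogeneous interior equation on the connected open set $\Omega$ and $u_0\not\equiv 0$, one has $u_0>0$ throughout $\Omega$. Second---and this is where the $C^{1,\beta}$ hypothesis on $\partial\Omega$ enters---I would invoke the boundary regularity theory for the $G$-Laplacian (Lieberman-type $C^{1,\gamma}$ estimates) to deduce that $u_0\in C^1$ up to the boundary near every point of $\partial\Omega\setminus\overline{\mathrm{W}_0}$, so that the natural boundary condition above holds pointwise $\mathcal{H}^{N-1}$-a.e. there and Hopf's boundary-point lemma is available.

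The contradiction is then immediate. Suppose $\mathcal{H}^{N-1}(\{u_0=0\})>\alpha$; then the set $Z\defeq\{u_0=0\}\cap(\partial\Omega\setminus\overline{\mathrm{W}_0})$ has positive $\mathcal{H}^{N-1}$-measure, so I may select $x_0\in Z$ at which the natural condition holds, $u_0$ is $C^1$, and $\partial\Omega$ satisfies the interior ball condition. On one hand, evaluating the natural boundary condition at $x_0$ (where $u_0=0$) gives $\tfrac{G'(|\nabla u_0|)}{|\nabla u_0|}\,\tfrac{\partial u_0}{\partial\nu}(x_0)=\lambda H'(0)\ge 0$, hence $\tfrac{\partial u_0}{\partial\nu}(x_0)\ge 0$. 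On the other hand, since $u_0>0$ in $\Omega$ and $u_0(x_0)=0$, Hopf's lemma for the $G$-Laplacian forces $\tfrac{\partial u_0}{\partial\nu}(x_0)<0$ (and in particular $\nabla u_0(x_0)\neq 0$, so the quotient is meaningful). This sign clash is impossible, whence $\mathcal{H}^{N-1}(Z)=0$ and therefore $\{u_0=0\}\cap\partial\Omega\subseteq\overline{\mathrm{W}_0}$ up to a null set, giving $\mathcal{H}^{N-1}(\{u_0=0\})\le\alpha$. Combined with Theorem \ref{Thm1} this yields the asserted equality.

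I expect the main obstacle to be the regularity step rather than the variational bookkeeping. One must guarantee enough smoothness of $u_0$ at the free boundary to render both the pointwise natural condition and Hopf's lemma legitimate: this is precisely the role of the $C^{1,\beta}$ assumption, combined with the Lieberman-type up-to-the-boundary estimates and the strong maximum and boundary-point principles for the $G$-Laplacian, whose applicability must be verified against the standing growth hypotheses on $G$. The degenerate/singular nature of $\Delta_G$ near $\nabla u_0=0$ also requires the Hopf lemma in a form that simultaneously delivers $\nabla u_0(x_0)\neq 0$, which is what makes the sign comparison decisive.
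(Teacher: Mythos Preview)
Your proposal is correct and follows essentially the same route as the paper: argue by contradiction, observe that $u_0$ is a minimizer for $S_{G,H}(\mathrm{W}_0)$ and hence satisfies the Euler--Lagrange system, obtain interior positivity via the strong maximum principle, invoke Lieberman-type $C^{1,\gamma}$ boundary regularity (this is where the $C^{1,\beta}$ hypothesis enters), and then apply Hopf's boundary-point lemma at a boundary zero lying outside $\mathrm{W}_0$ to contradict the Neumann condition. The only cosmetic difference is that in the paper's setting $h(0)=H'(0)=0$ by the standing assumptions on Young functions, so the Neumann condition forces the conormal derivative to equal zero (not merely $\ge 0$) at such a point; your weaker inequality is of course still enough for the contradiction with Hopf.
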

It is worth to mention that any optimization pair $(u,
\mathrm{W})$ coming from Theorem \ref{Thm1} is linked thought the
following singular/degenerate elliptic PDE:
$$
  \left\{
\begin{array}{rclcl}
  -\div\left(\frac{G^{\prime}(|\nabla u|)}{|\nabla u|}\cdot \nabla u\right) + \frac{g(|u|)u}{|u|} & = & 0 & \text{in} & \Omega \setminus \mathrm{W}\\
  g(|\nabla u|)\frac{\nabla u}{|\nabla u|}\cdot\eta & = &  S_{G,H}(\mathrm{W})
\frac{h(|u|)u}{|u|} & \text{on} & \partial \Omega\setminus\mathrm{W} \\
  u & = & 0 & \text{on} & \mathrm{W}.
\end{array}
\right.
$$

In comparison to previous results in the literature, namely \cite{DelPBN}, \cite{BRW1} and  \cite{BRW2}, our results include the case where the PDE/Neumann condition become ``singular''.

In final part of article, we put special attention to the shape optimization problem for finding an optimal interior hole
$A\subseteq\Omega$ with prescribed volume associated to the
``Orlicz-Sobolev embedding'' constant, i.e.,
\begin{equation}\label{EqMinHole}
  \mathbb{S}_{G,H}(\mathrm{A}) := \inf_{}\left\{\int_\Omega G(|\nabla u|)+ G(|u|)dx\colon \int_{\partial\Omega}  H(|u|) d \mathcal{H}^{N-1}=1\right\},
\end{equation}
where the infimum is taken in the class
$$
 \mathrm{X}_{\mathrm{A}}:=\left\{u\in W^{1,G}(\Omega)\setminus W_0^{1,G}(\Omega) \colon u=0\,\,\,\text{a.e. in } \mathrm{A}\right\}.
$$
In the same way we can consider an optimal design problem associated to the constant $\mathbb{S}_{G,H}(A)$, as follows: for $\alpha\in (0, \Leb(\Omega))$ we define
\begin{equation}\tag{{\bf $\alpha-$Hole}}\label{OptHole}
\mathbb{S}(\alpha) = \inf\left\{\mathbb{S}_{G,H}(\mathrm{A}):
\mathrm{A} \subset \Omega \quad \text{and} \quad  \Leb(\mathrm{A})=\alpha
\right\}.
\end{equation}
A set $A\subset\Omega$ in which the above infimium is achieved is called \textit{optimal interior hole}.

The following result is the analogous one of Theorems \ref{Thm1} and \ref{ThmCharac} for the optimization of a hole into the domain instead on the boundary. The case of the $p-$Laplacian operator with Steklov boundary condition it was considered in \cite{BRW2}.
\begin{theorem}\label{Thm4}
Given $0<\alpha<\Leb(\Omega)$. There exists a set $\mathrm{A}_0\subset \Omega$ such that $\Leb(\mathrm{A}_0)=\alpha$ and $\mathbb{S}_{G, H}(\mathrm{A}_0) = \mathbb{S}(\alpha)$. Moreover, every corresponding extremal $u_0$ to \eqref{OptHole} verifies that $\Leb(\{u_0=0\})=\alpha$.
\end{theorem}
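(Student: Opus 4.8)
The plan is to adapt the two-step strategy behind Theorems~\ref{Thm1} and~\ref{ThmCharac} to the interior setting, replacing the surface measure $\mathcal{H}^{N-1}$ by $\Leb$ and the boundary-exclusion condition by the volume one defining $\mathrm{X}_{\mathrm{A}}$. First I would record that for a \emph{fixed} admissible hole $A$ the constant $\mathbb{S}_{G,H}(A)$ is attained: taking a minimizing sequence $\{u_n\}\subset \mathrm{X}_A$ with $\int_{\partial\Omega}H(|u_n|)\,d\mathcal{H}^{N-1}=1$, the bound on $\int_\Omega G(|\nabla u_n|)+G(|u_n|)\,dx$ together with the $\Delta_2$ structure of $G$ (Section~\ref{Prelim}) yields boundedness in $W^{1,G}(\Omega)$; reflexivity gives $u_n\rightharpoonup u_A$, weak lower semicontinuity of the convex modular preserves the energy inequality, the compact trace embedding of Theorem~\ref{compacidad} keeps the constraint $\int_{\partial\Omega}H(|u_A|)\,d\mathcal{H}^{N-1}=1$, and the condition $u=0$ a.e.\ in $A$ passes to the (strongly $L^G$-convergent) limit, so $u_A\in \mathrm{X}_A$ is a minimizer.

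The core of the existence of an optimal hole is a weak-$*$ compactness argument for the excluded sets. Let $\{A_n\}$ be holes with $\Leb(A_n)=\alpha$ and $\mathbb{S}_{G,H}(A_n)\to\mathbb{S}(\alpha)$, and let $u_n$ be corresponding minimizers. As above $\{u_n\}$ is bounded in $W^{1,G}(\Omega)$; extract $u_n\rightharpoonup u_0$ with $u_n\to u_0$ a.e.\ in $\Omega$ (Rellich-type compactness) and $\int_{\partial\Omega}H(|u_0|)\,d\mathcal{H}^{N-1}=1$. The indicators satisfy $0\le\chi_{A_n}\le 1$ with $\int_\Omega\chi_{A_n}\,dx=\alpha$, so up to a subsequence $\chi_{A_n}\overset{*}{\rightharpoonup}\theta$ in $L^\infty(\Omega)$ with $\int_\Omega\theta\,dx=\alpha$. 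I claim $\theta$ is supported in $\{u_0=0\}$: on $\{|u_0|>\delta\}$ the a.e.\ convergence forces $|u_n|>0$ eventually, hence $\chi_{A_n}\to 0$ pointwise there (recall $u_n\equiv 0$ on $A_n$), and dominated convergence together with weak-$*$ convergence give $\int_{\{|u_0|>\delta\}}\theta\,dx=0$; letting $\delta\downarrow 0$ proves the claim. Consequently
$$
\alpha=\int_\Omega\theta\,dx=\int_{\{u_0=0\}}\theta\,dx\le\Leb(\{u_0=0\}).
$$
Choosing any $A_0\subseteq\{u_0=0\}$ with $\Leb(A_0)=\alpha$ gives $u_0\in\mathrm{X}_{A_0}$, whence by weak lower semicontinuity $\mathbb{S}(\alpha)\le\mathbb{S}_{G,H}(A_0)\le\int_\Omega G(|\nabla u_0|)+G(|u_0|)\,dx\le\liminf_n\mathbb{S}_{G,H}(A_n)=\mathbb{S}(\alpha)$, so $A_0$ is an optimal hole and $u_0$ an extremal.

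For the sharp-measure claim, the inequality $\Leb(\{u_0=0\})\ge\alpha$ is immediate, since any extremal vanishes on its associated optimal hole $A_0$ with $\Leb(A_0)=\alpha$. For the reverse inequality I would first observe that $|u_0|$ has the same energy and boundary modular and also lies in $\mathrm{X}_{A_0}$, so we may assume $u_0\ge 0$. Writing the Euler--Lagrange equation for the constrained problem and testing with functions supported (a.e.) in $\Omega\setminus A_0$, where the volume constraint is inactive and the boundary term does not intervene, shows that $u_0$ is a nonnegative weak solution of the homogeneous problem
$$
-\div\!\left(\frac{g(|\nabla u_0|)}{|\nabla u_0|}\,\nabla u_0\right)+\frac{g(|u_0|)u_0}{|u_0|}=0 \quad\text{in } \Omega\setminus A_0 .
$$
By the interior regularity theory and strong maximum principle / unique continuation for the $G$-Laplacian (cf.\ \cite{Lieberman}, \cite{Mon}), the set $\{u_0=0\}\setminus A_0$ must be $\Leb$-null, for otherwise $u_0\equiv 0$, contradicting $\int_{\partial\Omega}H(|u_0|)\,d\mathcal{H}^{N-1}=1$. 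Hence $\Leb(\{u_0=0\})=\Leb(A_0)=\alpha$.

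The main obstacle is precisely this last step: controlling the zero set of $u_0$ inside $\Omega\setminus A_0$. The existence part is soft, but the sharp measure hinges on a V\'azquez-type positivity statement for the quasilinear, possibly singular operator $\Delta_G$ together with the reaction term $g(|u_0|)u_0/|u_0|$; one must verify that the growth conditions on $G$ (the $\Delta_2$ and $\nabla_2$ bounds of Section~\ref{Prelim}) place this zero-order term in the regime where no interior zero set of positive measure can survive. This is exactly where the interior problem departs from the boundary case of Theorem~\ref{ThmCharac}: here interior ellipticity suffices and no $C^{1,\beta}$ regularity of $\partial\Omega$ is needed.
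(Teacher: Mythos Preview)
Your existence argument is essentially the paper's: both extract a weak limit of the indicators $\chi_{A_n}$ (you in $L^\infty$ weak-$*$, the paper in $L^{\tilde G}$), show the limit is supported in $\{u_0=0\}$, deduce $\Leb(\{u_0=0\})\ge\alpha$, and pick $A_0\subset\{u_0=0\}$ with $\Leb(A_0)=\alpha$.

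The gap is in the sharp-measure step, and it is not the one you flag. You argue: $u_0$ solves the equation in $\Omega\setminus A_0$, so by the strong maximum principle $\{u_0=0\}\setminus A_0$ is null, ``for otherwise $u_0\equiv 0$''. But the strong maximum principle only yields $u_0\equiv 0$ on the \emph{connected component} of $\Omega\setminus A_0$ that meets the zero set. If $A_0$ disconnects $\Omega$, there may well be interior components of $\Omega\setminus A_0$ on which $u_0\equiv 0$, with no contradiction to $\int_{\partial\Omega}H(|u_0|)\,d\mathcal H^{N-1}=1$, since those components do not touch $\partial\Omega$. So the implication ``positive-measure interior zero set $\Rightarrow u_0\equiv 0$ in $\Omega$'' fails, and the obstruction is topological, not a growth issue for the zero-order term.

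The paper's remedy exploits a freedom your argument does not use. Assuming $\Leb(\{u_0=0\})>\alpha$, take a closed $A\subset\{u_0=0\}$ with $\Leb(A)>\alpha$, let $\Omega_1$ be the component of $\Omega\setminus A$ with $\partial\Omega\subset\partial\Omega_1$, and choose a small ball $B$ centered at a point of $\partial A\cap\partial\Omega_1$ with $\Leb(A\setminus B)>\alpha$ and $\Leb(A\cap B)>0$. Because $A\setminus B\subset\{u_0=0\}$ still has measure $\ge\alpha$, the extremal $u_0$ for $\mathbb S(\alpha)$ is \emph{also} an extremal for $\mathbb S_{G,H}(A\setminus B)$, hence solves the equation in the \emph{enlarged} domain $(\Omega\setminus A)\cup B$. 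Now $B$ lies in the same connected component as $\Omega_1$, which reaches $\partial\Omega$ where $u_0\not\equiv 0$; the strong maximum principle on that component forces $u_0>0$ in $B$, contradicting $\Leb(\{u_0=0\}\cap B)>0$. The missing idea is precisely this: shrinking the hole (while keeping measure $\ge\alpha$) enlarges and reconnects the region where the Euler--Lagrange equation holds, allowing the maximum principle to bite.
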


In the next result we prove that there is no upper bound for $\mathbb{S}_{G, H}(\mathrm{A})$, where $\mathrm{A} \subset \Omega$ is an optimal interior hole. 

\begin{theorem}\label{Thm5}
Let $0<\alpha<\Leb(\Omega)$ be a fixed quantity. Then, the following statement holds true:
$$
 \sup\{ \mathbb{S}_{G, H}(\mathrm{A}): \mathrm{A} \subset \Omega \,\,\, \text{and}\,\,\,\Leb(\mathrm{A})= \alpha\}=+\infty.
$$
\end{theorem}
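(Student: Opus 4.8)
The plan is to prove the statement by exhibiting a sequence of admissible interior holes $\{\mathrm{A}_n\}$, each of volume $\alpha$, along which the constant blows up, i.e. $\mathbb{S}_{G,H}(\mathrm{A}_n)\to+\infty$; this immediately yields $\sup_{\mathrm{A}}\mathbb{S}_{G,H}(\mathrm{A})=+\infty$. The guiding heuristic is that if the vanishing constraint is imposed on a collar that \emph{recedes toward} $\partial\Omega$ while keeping its prescribed volume $\alpha$, then in the limit every competitor is forced to vanish in a whole one-sided neighborhood of the boundary, so its trace must vanish — which is incompatible with the normalization $\int_{\partial\Omega}H(|u|)\,d\mathcal{H}^{N-1}=1$.

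Concretely, for small $\eps_n>0$ with $\eps_n\to 0$ I would set
$$\mathrm{A}_n:=\{x\in\Omega:\ \eps_n\le \dist(x,\partial\Omega)\le d_n\},$$
choosing $d_n>\eps_n$ by continuity of $t\mapsto\Leb(\{\eps_n\le\dist(\cdot,\partial\Omega)\le t\})$ so that $\Leb(\mathrm{A}_n)=\alpha$; this is possible since $\alpha<\Leb(\Omega)$ and $\partial\Omega$ is regular. Each $\mathrm{A}_n$ is a genuine interior hole, $\dist(\mathrm{A}_n,\partial\Omega)=\eps_n>0$, so $\mathrm{X}_{\mathrm{A}_n}\neq\emptyset$ and $\mathbb{S}_{G,H}(\mathrm{A}_n)<\infty$. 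The crucial geometric feature is that, the volume being frozen at $\alpha$, one has $d_n\to d_0>0$, where $d_0$ is determined by $\Leb(\{0\le\dist(\cdot,\partial\Omega)\le d_0\})=\alpha$; hence the vanishing sets $\mathrm{A}_n$ accumulate on the full collar $\{0<\dist(\cdot,\partial\Omega)<d_0\}$.

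Then I argue by contradiction. Suppose $\mathbb{S}_{G,H}(\mathrm{A}_n)\le K<\infty$ along a subsequence, and pick $u_n\in\mathrm{X}_{\mathrm{A}_n}$ with $\int_{\partial\Omega}H(|u_n|)\,d\mathcal{H}^{N-1}=1$ and $\int_\Omega G(|\nabla u_n|)+G(|u_n|)\,dx\le K+1$. This modular bound yields a uniform bound for $\{u_n\}$ in $W^{1,G}(\Omega)$; by the Rellich-type compact embedding $W^{1,G}(\Omega)\hookrightarrow\hookrightarrow L^{G}(\Omega)$ we may assume $u_n\rightharpoonup u$ in $W^{1,G}(\Omega)$ and $u_n\to u$ strongly in $L^G(\Omega)$ and $\Leb$-a.e. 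Since $u_n=0$ on $\mathrm{A}_n\supseteq\{\delta\le\dist(\cdot,\partial\Omega)\le d_0-\delta\}$ for every fixed $\delta>0$ and all large $n$, passing to the limit and then letting $\delta\to 0$ gives $u=0$ $\Leb$-a.e. in $\{0<\dist(\cdot,\partial\Omega)<d_0\}$. As $u$ vanishes in a one-sided neighborhood of $\partial\Omega$, its trace satisfies $u=0$ $\mathcal{H}^{N-1}$-a.e. on $\partial\Omega$. On the other hand, the \emph{compact} trace embedding of Theorem~\ref{compacidad} forces $u_n\to u$ in $L^H(\partial\Omega)$, whence $\int_{\partial\Omega}H(|u|)\,d\mathcal{H}^{N-1}=\lim_n\int_{\partial\Omega}H(|u_n|)\,d\mathcal{H}^{N-1}=1$, contradicting the vanishing of the trace. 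Therefore $\mathbb{S}_{G,H}(\mathrm{A}_n)\to+\infty$.

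I expect the main obstacle to be the passage to the limit in the boundary normalization: one must transfer the information ``$u_n$ vanishes on a receding collar'' (an interior/volume statement) into ``$u$ has null trace'' (a boundary statement), and simultaneously guarantee that the boundary modular is continuous along the sequence. Both are handled by combining the interior compactness with the compact trace embedding (Theorem~\ref{compacidad}) and the $\Delta_2$-regularity of $H$, so that norm convergence in $L^H(\partial\Omega)$ upgrades to modular convergence. A direct quantitative route — bounding from below the energy stored in the thin strip $\{0<\dist(\cdot,\partial\Omega)<\eps_n\}$ via the coarea formula and Jensen's inequality, obtaining $\int_{\partial\Omega}\eps_n\,G(|u_n|/\eps_n)\,d\mathcal{H}^{N-1}$ — is also conceivable, but it is more delicate, since this lower bound must be played against the constraint $\int_{\partial\Omega}H(|u_n|)\,d\mathcal{H}^{N-1}=1$ and controlling the interplay would require the subcritical relation between $G$ and $H$; the compactness argument above sidesteps this difficulty entirely.
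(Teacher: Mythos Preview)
Your proposal is correct and uses the same construction as the paper (collars $\{\eps_n\le\dist(\cdot,\partial\Omega)\le d_n\}$ of prescribed volume with $\eps_n\to 0$) together with the same contradiction-via-compactness scheme, but the key step differs. The paper takes $v_\eps$ to be an actual \emph{minimizer} for $\mathbb{S}_{G,H}(\mathrm{A}_{\eps,\delta})$, observes that it satisfies the Euler--Lagrange equation in $\Omega_\delta=\{\dist(\cdot,\partial\Omega)>\delta\}$ with zero boundary data on $\partial\Omega_\delta$, and invokes the Comparison Principle to force $v_\eps\equiv 0$ throughout $\Omega_\delta$; hence $v_\eps\to 0$ a.e.\ in $\Omega$, and the contradiction with $\Phi_{H,\partial\Omega}(v_\eps)=1$ follows from compactness of the trace. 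You instead take \emph{near}-minimizers, use only the vanishing constraint on $\mathrm{A}_n$ to get $u=0$ a.e.\ on the open collar $\{0<\dist(\cdot,\partial\Omega)<d_0\}$, and conclude the trace of $u$ vanishes. Your route is more elementary: it dispenses with existence of extremals, the PDE, and the maximum principle, at the cost of needing the (standard) fact that a $W^{1,G}$ function vanishing in a one-sided neighborhood of $\partial\Omega$ has null trace. The paper's route yields the sharper intermediate information that the extremal itself vanishes on all of $\Omega_\eps$, though this is not needed for the theorem.
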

Next, in order to give sense to ``Orlicz-Sobolev trace constant'' for functions vanishing in a negligible subset (zero $N-$dimentional Lebesgue measure) we will need to consider the space
$$
   W_{\mathrm{A}}^{1,G}(\Omega)=\overline{C_0^\infty (\overline{\Omega}\setminus
\mathrm{A})}
$$
where the closure is taken in $W^{1,G}-$norm, i.e., $W_{\mathrm{A}}^{1,G}(\Omega)$ are the functions that can be approximated by smooth functions that vanish in a neighborhood of $\mathrm{A}$ (compare with \cite[Theorem 7.1.7]{FucJohnKuf}).

In this context the ``Orlicz-Sobolev constant'' is defined as
$$
\mathbb{S}_{\mathrm{A}}=\inf_{W_A^{1,G}(\Omega)}\left\{\int_\Omega G(|u|) +  G(|\nabla u|)dx\colon\int_{\partial  \Omega} H(|u|) d \mathcal{H}^{N-1}=1\right\}.
$$

At this point, it is important  to question when $\mathbb{S}_{\mathrm{A}}$ recovers the usual ``Orlicz-Sobolev trace constant'', i.e., when $\mathbb{S}_{\mathrm{A}}=\mathbb{S}_{\emptyset}$. A key ingredient for this result is the notion of $G-$capacitary sets (see section \ref{Sec6} for more details). We prove the following necessary and sufficient condition to this to hold.

\begin{theorem}\label{Thm1.6} $\mathbb{S}_{\mathrm{A}} = \mathbb{S}_\emptyset$ if and only if $\text{Cap}_G(\mathrm{A})=0$.
\end{theorem}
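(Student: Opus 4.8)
The plan is to regard $\mathbb{S}_\emptyset$ as the Orlicz--Sobolev trace constant on the full space $W_\emptyset^{1,G}(\Omega)=W^{1,G}(\Omega)$ and to exploit the inclusion $W_{\mathrm{A}}^{1,G}(\Omega)\subseteq W_\emptyset^{1,G}(\Omega)$. Since the competing class defining $\mathbb{S}_{\mathrm{A}}$ is smaller, passing to the infimum immediately yields $\mathbb{S}_{\mathrm{A}}\ge \mathbb{S}_\emptyset$; hence the asserted equivalence reduces to deciding when the reverse inequality $\mathbb{S}_{\mathrm{A}}\le \mathbb{S}_\emptyset$ holds. Throughout I would use that, by the compact trace embedding of Theorem \ref{compacidad} together with the direct method (exactly as in the proofs of Theorems \ref{Thm1} and \ref{Thm4}), both infima are attained; I may also take minimizers nonnegative, continuous in $\Omega$ by the interior regularity theory for the $G$--Laplacian, and strictly positive in $\Omega$ by the Harnack inequality / strong maximum principle, since the normalization $\int_{\partial\Omega}H(|u|)\,d\mathcal{H}^{N-1}=1$ rules out the trivial solution.

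For the sufficiency, assume $\text{Cap}_G(\mathrm{A})=0$. Then there exist cut--off functions $w_\eps\in C_0^\infty(\Omega)$ with $0\le w_\eps\le 1$, $w_\eps\equiv 1$ on neighborhoods of $\mathrm{A}$ shrinking to $\mathrm{A}$, and $\int_\Omega G(|\nabla w_\eps|)\,dx\to 0$, while $w_\eps\to 0$ a.e. Fixing a minimizer $u_0$ for $\mathbb{S}_\emptyset$ and setting $v_\eps=(1-w_\eps)u_0$, the function $v_\eps$ vanishes in a neighborhood of $\mathrm{A}$, so a mollification argument places $v_\eps\in W_{\mathrm{A}}^{1,G}(\Omega)$; moreover $w_\eps$ is supported in the interior, whence $v_\eps=u_0$ near $\partial\Omega$ and the normalization $\int_{\partial\Omega}H(|v_\eps|)\,d\mathcal{H}^{N-1}=1$ is preserved. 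Using $\nabla v_\eps=(1-w_\eps)\nabla u_0-u_0\nabla w_\eps$, the boundedness of $u_0$ on the (interior, compact) support of $w_\eps$, the $\Delta_2$ condition on $G$ to split $G(a+b)\lesssim G(a)+G(b)$, and dominated convergence, one gets $\int_\Omega G(|\nabla v_\eps|)+G(|v_\eps|)\,dx\to \int_\Omega G(|\nabla u_0|)+G(|u_0|)\,dx=\mathbb{S}_\emptyset$. Since each $v_\eps$ is admissible for $\mathbb{S}_{\mathrm{A}}$, this forces $\mathbb{S}_{\mathrm{A}}\le \mathbb{S}_\emptyset$, hence equality.

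For the necessity I would argue by contraposition, proving $\text{Cap}_G(\mathrm{A})>0\Rightarrow \mathbb{S}_{\mathrm{A}}>\mathbb{S}_\emptyset$. Suppose instead that $\mathbb{S}_{\mathrm{A}}=\mathbb{S}_\emptyset$ and take a minimizing sequence $u_j\in W_{\mathrm{A}}^{1,G}(\Omega)$ for $\mathbb{S}_{\mathrm{A}}$. By the compactness of Theorem \ref{compacidad} it converges, up to a subsequence, weakly in $W^{1,G}(\Omega)$ and strongly in the trace to some $u_\star$; since $W_{\mathrm{A}}^{1,G}(\Omega)$ is a closed subspace (hence weakly closed) and the modular is weakly lower semicontinuous, $u_\star\in W_{\mathrm{A}}^{1,G}(\Omega)$ satisfies $\int_{\partial\Omega}H(|u_\star|)\,d\mathcal{H}^{N-1}=1$ and $\int_\Omega G(|\nabla u_\star|)+G(|u_\star|)\,dx=\mathbb{S}_\emptyset$, i.e. $u_\star$ is a minimizer for $\mathbb{S}_\emptyset$ lying in $W_{\mathrm{A}}^{1,G}(\Omega)$. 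On the one hand $u_\star$ is continuous and strictly positive in $\Omega$ by the regularity and maximum principle recalled above; on the other hand, membership in $W_{\mathrm{A}}^{1,G}(\Omega)$ forces the quasicontinuous representative of $u_\star$ to vanish $\text{Cap}_G$--quasi everywhere on $\mathrm{A}$ (the capacitary characterization of the space developed in Section \ref{Sec6}). If $\text{Cap}_G(\mathrm{A})>0$ these two facts are incompatible, which gives the desired contradiction and therefore $\text{Cap}_G(\mathrm{A})=0$.

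I expect the main obstacle to lie in the necessity step, where two independent pieces of heavy machinery must be aligned: the interior $C^{1,\beta}$ regularity and strict positivity of the extremal for the inhomogeneous $G$--Laplacian Steklov problem, and the quasi--continuity theory underlying the identification of $W_{\mathrm{A}}^{1,G}(\Omega)$ with the functions vanishing $\text{Cap}_G$--q.e. on $\mathrm{A}$ (compare \cite[Theorem 7.1.7]{FucJohnKuf}). The delicate point is that a set of positive $G$--capacity may have zero Lebesgue measure, so the contradiction cannot be extracted from a purely measure--theoretic argument and genuinely requires the capacitary vanishing of a strictly positive continuous minimizer.
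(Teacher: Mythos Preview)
Your sufficiency argument is essentially the paper's: both multiply by $1-w_\eps$ for a capacity test function $w_\eps$ and use the $\Delta_2$ condition to control the energy. The only cosmetic difference is that the paper proves the stronger statement $W^{1,G}_{\mathrm{A}}(\Omega)=W^{1,G}(\Omega)$ by approximating an arbitrary $u\in C^\infty(\bar\Omega)$, whereas you approximate the specific minimizer $u_0$; this spares you a density step but requires you to know $u_0$ is locally bounded (interior regularity) rather than using $\|u\|_\infty$ for a smooth $u$. One small point: your claim that $w_\eps\in C_0^\infty(\Omega)$, hence $v_\eps=u_0$ near $\partial\Omega$, is not automatic from the definition of $\text{Cap}_G$, whose test functions live in $W^{1,G}(\R^N)\cap C^\infty(\R^N)$; you should say why the cut--offs can be localized away from $\partial\Omega$.

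Your necessity argument is correct in spirit but takes a genuinely different route from the paper. You argue indirectly: if $\mathbb{S}_{\mathrm{A}}=\mathbb{S}_\emptyset$, the extremal $u_\star\in W^{1,G}_{\mathrm{A}}(\Omega)$ is also an extremal for $\mathbb{S}_\emptyset$, hence continuous and strictly positive in $\Omega$; combined with the capacitary characterization ``$u\in W^{1,G}_{\mathrm{A}}(\Omega)\Rightarrow u=0$ $\text{Cap}_G$--q.e. on $\mathrm{A}$'' this forces $\text{Cap}_G(\mathrm{A})=0$. The paper instead argues \emph{constructively}: it uses boundary regularity and Hopf to get $u_0\in C^{1,\gamma}(\bar\Omega)$ with $\beta:=\inf_{\bar\Omega}u_0>0$, takes an approximating sequence $u_k\in C_0^\infty(\bar\Omega\setminus\mathrm{A})$ with $u_k\to u_0$ in $W^{1,G}$, and checks that $\varphi_k:=1-u_k/u_0$ are admissible capacity test functions with $\|\varphi_k\|_{W^{1,G}}\to 0$, directly yielding $\text{Cap}_G(\mathrm{A})=0$. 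The trade--off is clear: the paper's argument is self--contained (no quasi--continuity theory) but needs $u_0>0$ on $\bar\Omega$, hence $C^{1,\gamma}$ regularity up to the boundary; your argument only needs interior positivity, but it leans on the identification of $W^{1,G}_{\mathrm{A}}(\Omega)$ with functions vanishing $\text{Cap}_G$--q.e.\ on $\mathrm{A}$. Be aware that this identification is \emph{not} ``developed in Section~\ref{Sec6}'' of the paper---Section~\ref{Sec6} only records the definition of $\text{Cap}_G$ and points to \cite[Theorem 7.1.7]{FucJohnKuf} with a ``compare with''; you would need to cite or supply the Orlicz--Sobolev quasi--continuity/Hedberg-type result explicitly.
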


Next, we address   the continuity of  $\mathbb{S}_{\mathrm{A}}$ with respect to $\mathrm{A}$ in the Hausdorff topology. Furthermore, we establish the continuity of corresponding extremals (in the $W^{1, G}$ norm) with respect to the Hausdorff topology of the sets $\mathrm{A}$.

\begin{theorem}\label{Thm1.7}
Let $\mathrm{A},\mathrm{A_k}$ be closed sets such that
$$
  \text{dist}_{\mathcal{H}}(\mathrm{A}, \mathrm{A_k})\to 0 \quad  \text{as} \quad k\to\infty.
$$
Then,
$$
  |\mathbb{S}_{\mathrm{A_k}}-\mathbb{S}_{\mathrm{A}}|\to 0 \quad \text{as} \quad k\to\infty.
$$
Moreover, if $u_k$ is an extremal for
$\mathbb{S}_{\mathrm{A_k}}$ normalized such that $\int_{\partial
\Omega} H(|u_k|) d \mathcal{H}^{N-1}=1$, then up to a
subsequence,
$$
  u_{k} \rightarrow u \quad \text{strongly in} \quad W^{1,G}_A(\Omega)
$$
and $u$ is an extremal for $\mathbb{S}_{\mathrm{A}}$.
\end{theorem}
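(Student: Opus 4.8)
The plan is to establish the two assertions by the direct method of the calculus of variations, decomposing the convergence $\mathbb{S}_{\mathrm{A_k}}\to\mathbb{S}_{\mathrm A}$ into a $\limsup$ and a $\liminf$ inequality, and then extracting the convergence of the extremals from the $\liminf$ analysis. I would begin with the (easier) upper bound $\limsup_k \mathbb{S}_{\mathrm{A_k}}\le \mathbb{S}_{\mathrm A}$ by transferring admissible competitors from the limit problem to the approximating ones. Fix $\eps>0$ and choose $v\in C_0^\infty(\overline{\Omega}\setminus\mathrm A)$ with $\int_{\partial\Omega}H(|v|)\,d\mathcal H^{N-1}=1$ and $\int_\Omega G(|v|)+G(|\nabla v|)\,dx\le \mathbb{S}_{\mathrm A}+\eps$; such a $v$ exists because $W^{1,G}_{\mathrm A}(\Omega)$ is by definition the $W^{1,G}$-closure of $C_0^\infty(\overline\Omega\setminus\mathrm A)$ and the trace is continuous, so the normalization can be restored by a harmless rescaling. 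Since $v$ vanishes on an open neighbourhood $U\supset\mathrm A$, and $\text{dist}_{\mathcal H}(\mathrm A,\mathrm A_k)\to0$ forces $\mathrm A_k\subset U$ for all large $k$, the function $v$ is admissible for $\mathbb{S}_{\mathrm{A_k}}$; hence $\mathbb{S}_{\mathrm{A_k}}\le\mathbb{S}_{\mathrm A}+\eps$, and letting $k\to\infty$ and then $\eps\to0$ gives the bound. In particular $\{\mathbb{S}_{\mathrm{A_k}}\}_k$ is bounded.

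Next I would set up the compactness and the lower bound. Let $u_k$ be the normalized extremals for $\mathbb{S}_{\mathrm{A_k}}$. The boundedness of $\mathbb{S}_{\mathrm{A_k}}$ controls the modular $\int_\Omega G(|u_k|)+G(|\nabla u_k|)\,dx$, and through the $\Delta_2/\nabla_2$ structure of $G$ this yields a uniform bound on $\|u_k\|_{W^{1,G}(\Omega)}$; by reflexivity of $W^{1,G}(\Omega)$ we may assume $u_k\rightharpoonup u$ weakly. The compact trace embedding of Theorem \ref{compacidad} then gives $u_k\to u$ in $L^H(\partial\Omega)$, whence $\int_{\partial\Omega}H(|u|)\,d\mathcal H^{N-1}=1$; in particular $u\notin W^{1,G}_0(\Omega)$. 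Convexity of $\xi\mapsto G(|\xi|)$ makes the energy weakly lower semicontinuous, so $\int_\Omega G(|u|)+G(|\nabla u|)\,dx\le\liminf_k\mathbb{S}_{\mathrm{A_k}}$. Thus, once we know that $u$ is admissible for $\mathbb{S}_{\mathrm A}$, that is $u\in W^{1,G}_{\mathrm A}(\Omega)$, we obtain $\mathbb{S}_{\mathrm A}\le\int_\Omega G(|u|)+G(|\nabla u|)\,dx\le\liminf_k\mathbb{S}_{\mathrm{A_k}}$, which combined with the upper bound forces $\mathbb{S}_{\mathrm{A_k}}\to\mathbb{S}_{\mathrm A}$ and identifies $u$ as an extremal.

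The main obstacle is precisely this admissibility step, namely the stability of the capacitary constraint under Hausdorff convergence: showing that the weak limit $u$ of functions $u_k$ vanishing near $\mathrm A_k$ still lies in $W^{1,G}_{\mathrm A}(\Omega)$. My plan here is to work through the $G$-capacity framework of Section \ref{Sec6}, using the characterization $W^{1,G}_{\mathrm A}(\Omega)=\{v\in W^{1,G}(\Omega):\tilde v=0\ \text{$G$-q.e. on }\mathrm A\}$ (the same machinery behind Theorem \ref{Thm1.6}), and then to produce an explicit approximation of $u$ by functions vanishing near $\mathrm A$. Concretely I would cut off $u$ with $\eta_\delta\in C^\infty$ equal to $0$ on the $\delta$-neighbourhood of $\mathrm A$ and to $1$ outside its $2\delta$-neighbourhood, noting that since $\mathrm A$ is an interior hole the cutoff leaves a neighbourhood of $\partial\Omega$ untouched and so preserves the boundary normalization. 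The delicate point is to control the annular error term $u\,\nabla\eta_\delta$, supported in a shell of width $\delta$ where $|\nabla\eta_\delta|\sim\delta^{-1}$: this requires that $u$ be suitably small near $\mathrm A$, which I would deduce from the inclusion $\mathrm A\subset N_{\rho_k}(\mathrm A_k)$ together with the $G$-capacitary estimates and the local $C^{1,\beta}$-regularity (hence equicontinuity) of the extremals $u_k$ away from the holes, passing $u_k=0$ on $\mathrm A_k$ to the limit along points $y_k\in\mathrm A_k$ approaching a given point of $\mathrm A$. This is the genuinely hard part, and is where the full strength of the Hausdorff convergence (in the direction $\mathrm A\subset N_{\rho_k}(\mathrm A_k)$) and the capacity theory must be used; all the remaining estimates are then routine modular computations showing $\eta_\delta u\to u$ in $W^{1,G}$ as $\delta\to0$.

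Finally, for the strong convergence of extremals I would argue as follows. Having shown $\int_\Omega G(|u_k|)+G(|\nabla u_k|)\,dx=\mathbb{S}_{\mathrm{A_k}}\to\mathbb{S}_{\mathrm A}=\int_\Omega G(|u|)+G(|\nabla u|)\,dx$ together with $u_k\rightharpoonup u$ weakly, I would invoke the uniform convexity of the Orlicz modular (Clarkson-type inequalities under $\Delta_2\cap\nabla_2$, i.e. the Radon–Riesz property of $W^{1,G}$) to upgrade weak convergence with convergence of energies into strong convergence $u_k\to u$ in $W^{1,G}_{\mathrm A}(\Omega)$. Since $u$ satisfies the normalization and attains $\mathbb{S}_{\mathrm A}$, it is an extremal, completing the proof.
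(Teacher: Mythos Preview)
Your strategy is correct and largely parallels the paper's, but with two differences worth flagging. First, for the convergence $\mathbb{S}_{\mathrm{A_k}}\to\mathbb{S}_{\mathrm A}$ the paper does \emph{not} run a liminf-via-compactness argument; instead it obtains both inequalities by the symmetric transfer-of-competitors device (using that Hausdorff convergence gives $A_k\subset A_\varepsilon$ and, ``analogously'', $A\subset (A_k)_\varepsilon$), and only afterwards invokes compactness to identify the extremal. Second, the paper does not renormalize the approximants $u_\delta$: since the modular is inhomogeneous, ``harmless rescaling'' is not entirely innocent here, and the paper instead absorbs the defect via the $\eta$-triangle inequality of Lemma~\ref{Lemma2.3}, writing $\Phi(u_\delta)\le (1+\eta)^{g^+}\Phi(u)+C_\eta\Phi(u-u_\delta)$ and sending $\delta\to0$ then $\eta\to0$. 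Your explicit Radon--Riesz step for the strong convergence is the right way to close the argument; the paper only records convergence of modulars together with weak convergence and leaves the upgrade implicit.

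On the admissibility step $u\in W^{1,G}_{\mathrm A}(\Omega)$: you are right that this is the crux, and the paper simply declares it ``easy to see''. Your plan via interior $C^{1,\beta}$-regularity is workable, but the cutoff computation with $u\,\nabla\eta_\delta$ is more than you need. A cleaner route along your own lines: the uniform $W^{1,G}$-bound on $(u_k)$ gives, by the local estimates in Remark~\ref{remarkk}(1), a uniform $C^{1,\beta}$-bound on each compact $K\Subset\Omega$, hence (Arzel\`a--Ascoli) $u_k\to u$ locally uniformly in $\Omega$. Given $x\in\mathrm A$, pick $x_k\in\mathrm A_k$ with $|x-x_k|\le \mathrm{dist}_{\mathcal H}(\mathrm A,\mathrm A_k)\to0$; then $u(x)=\lim_k u_k(x_k)=0$. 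Thus $u$ vanishes everywhere on $\mathrm A$, and the $G$-capacity characterization behind Theorem~\ref{Thm1.6} yields $u\in W^{1,G}_{\mathrm A}(\Omega)$ directly, without the annular error estimate.
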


In conclusion, a natural issue is what can be inferred about the
extremals $u$ and ``the optimal set'' $\{u = 0\} \subset \partial
\Omega$ when the domain has certain symmetry. In our last result, we
prove that (when $\Omega$ is a unity ball) there exists an extremal
(resp. an optimal window) spherically symmetric.

\begin{theorem}\label{SymmetThm}
Let $\Omega = B_1$ and let $0<\alpha<\mathcal{H}^{N-1}(\partial \Omega)$ fixed. Then, there exists an optimal window which is a spherical cap.
\end{theorem}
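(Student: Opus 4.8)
The plan is to start from an optimal pair produced by Theorem~\ref{Thm1} and to improve it by a spherical (cap) symmetrization, exploiting the rotational symmetry of $B_1$. Concretely, let $W_0\subset\partial B_1$ be an optimal window given by Theorem~\ref{Thm1}\eqref{Stat2}, so that $\mathcal{H}^{N-1}(W_0)=\alpha$ and $S_{G,H}(W_0)=\S(\alpha)$, and let $u_0\in\mathrm{X}_{W_0}$ be an associated minimizer, normalized by $\int_{\partial B_1}H(|u_0|)\,d\mathcal{H}^{N-1}=1$ and with
$$
\int_{B_1}G(|\nabla u_0|)+G(|u_0|)\,dx=S_{G,H}(W_0)=\S(\alpha).
$$
Fix a pole $e\in\partial B_1$ and let $u_0^{\star}$ denote the \emph{spherical cap symmetrization} of $u_0$ with respect to $e$: on each sphere $\{|x|=r\}$, $0<r\le 1$, we replace $u_0|_{\{|x|=r\}}$ by the unique function that depends only on the geodesic distance to $re$, is nonincreasing in that distance, and is equimeasurable with $u_0|_{\{|x|=r\}}$ with respect to the surface measure of $\{|x|=r\}$. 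The target cap will then emerge as (a subset of) the zero set of $u_0^{\star}$ on $\partial B_1$, which is a geodesic ball centered at $-e$.

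The proof reduces to three properties of this rearrangement. First, equimeasurability on each sphere gives, by the layer--cake formula together with polar coordinates,
$$
\int_{B_1}G(|u_0^{\star}|)\,dx=\int_{B_1}G(|u_0|)\,dx,\qquad
\int_{\partial B_1}H(|u_0^{\star}|)\,d\mathcal{H}^{N-1}=\int_{\partial B_1}H(|u_0|)\,d\mathcal{H}^{N-1}=1,
$$
so the lower order and the boundary terms are preserved; in particular $u_0^{\star}$ has nontrivial trace and hence $u_0^{\star}\in\mathrm{X}$. Second, and this is the analytic core of the argument, one needs the \emph{P\'olya--Szeg\H{o} inequality} in the Orlicz--Sobolev setting for cap symmetrization,
$$
\int_{B_1}G(|\nabla u_0^{\star}|)\,dx\le\int_{B_1}G(|\nabla u_0|)\,dx ,
$$
which in particular guarantees $u_0^{\star}\in W^{1,G}(\Omega)$. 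Third, again by equimeasurability on $\partial B_1$, the zero set $Z:=\{u_0^{\star}=0\}\cap\partial B_1$ is a spherical cap centered at $-e$ with $\mathcal{H}^{N-1}(Z)=\mathcal{H}^{N-1}(\{u_0=0\}\cap\partial B_1)\ge\mathcal{H}^{N-1}(W_0)=\alpha$.

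With these facts the conclusion is immediate. Choose a concentric sub-cap $C\subseteq Z$ centered at $-e$ with $\mathcal{H}^{N-1}(C)=\alpha$, which is possible since $\mathcal{H}^{N-1}(Z)\ge\alpha$ and the measures of concentric caps sweep out the whole interval $[0,\mathcal{H}^{N-1}(\partial B_1)]$ continuously. Then $u_0^{\star}$ vanishes $\mathcal{H}^{N-1}$--a.e.\ on $C$ and satisfies the normalization, so it is admissible in the definition of $S_{G,H}(C)$; combining the three displayed relations,
$$
S_{G,H}(C)\le\int_{B_1}G(|\nabla u_0^{\star}|)+G(|u_0^{\star}|)\,dx\le\int_{B_1}G(|\nabla u_0|)+G(|u_0|)\,dx=\S(\alpha).
$$
Since $C$ is an admissible window of measure $\alpha$, the definition of $\S(\alpha)$ forces $S_{G,H}(C)\ge\S(\alpha)$, whence $S_{G,H}(C)=\S(\alpha)$, and the spherical cap $C$ is an optimal window, as claimed.

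The main obstacle is the Orlicz P\'olya--Szeg\H{o} inequality for cap symmetrization, since the classical statements are usually phrased for the Schwarz (ball) rearrangement and for power growth. I would handle it by the polarization (two-point rearrangement) method: for a hyperplane $\pi$ through the origin the polarization $u^{\pi}$ preserves $\int_{B_1}G(|u|)\,dx$ and $\int_{\partial B_1}H(|u|)\,d\mathcal{H}^{N-1}$ (as $B_1$ and $\partial B_1$ are invariant under reflection in $\pi$) and satisfies the elementary inequality $\int_{B_1}G(|\nabla u^{\pi}|)\,dx\le\int_{B_1}G(|\nabla u|)\,dx$, which follows from convexity of $G$ and the pointwise rearrangement of the two reflected values. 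One then recovers the cap symmetrization $u_0^{\star}$ as the strong $W^{1,G}$ limit of a suitable sequence of iterated polarizations, in the spirit of Van Schaftingen's approximation theorem, and passes the preserved constraints and the non-increase of the gradient energy to the limit. A point to verify with care is the measurability and regularity needed so that the limiting zero set on $\partial B_1$ is genuinely a cap up to an $\mathcal{H}^{N-1}$--null set, which again follows from the equimeasurability preserved along the polarization sequence.
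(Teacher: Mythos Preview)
Your argument is correct and follows the same overall strategy as the paper---pass from an optimal profile to its spherical symmetrization and check that the energy does not increase while the constraints are preserved---but the execution differs in two places worth noting. First, the paper invokes Theorem~\ref{ThmCharac} rather than Theorem~\ref{Thm1}, so that the original extremal already satisfies $\mathcal{H}^{N-1}(\{u_0=0\})=\alpha$; after symmetrization this produces directly a cap of measure exactly $\alpha$, and your sub-cap step becomes unnecessary. Second, for the gradient inequality the paper does not argue via polarization but instead records a separate result (Proposition~\ref{reluu*}, resting on \cite{Bramanti}) that gives the Orlicz P\'olya--Szeg\H{o} inequality for spherical symmetrization directly. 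Your polarization/approximation route is a legitimate and in some ways more self-contained alternative; the one point to tighten is the claim of \emph{strong} $W^{1,G}$ convergence of iterated polarizations to $u_0^{\star}$. What one typically obtains (and what suffices here) is strong convergence in $L^G$ together with the equality $\int_{B_1}G(|\nabla u^{\pi}|)\,dx=\int_{B_1}G(|\nabla u|)\,dx$ at each polarization step, hence a uniform bound on the gradient modulars, weak $W^{1,G}$ compactness, and then the inequality for $u_0^{\star}$ via lower semicontinuity of $v\mapsto\int_{B_1}G(|\nabla v|)\,dx$.
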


\subsection*{Organization of the paper} Our manuscript is organized as follows: in Section \ref{Prelim} we collect some preliminary results in the framework of Orlicz-Sobolev spaces. In Section \ref{PropMin} we
present a number of properties for minimizers of our optimization
problem. Section \ref{OptProb} is devoted to analyze our shape
optimization problem and its features. In Sections  \ref{OptProbHole} and \ref{Sec6} we establish existence and further results for extremals and optimal sets for the corresponding $\mathrm{A}-$vanishing optimization problem. Finally, Section \ref{SpheSym} is dedicated to prove a spherical symmetrization result.

\section{Technical tools}\label{Prelim}
In this section we introduce some well-known definitions and auxiliary results.
\subsection{Young functions}
We  consider the well-known set of \textit{Young functions}. A function $G:\R^+\to \R$ in this class admits the following  representation
$$
  G(t)=\int_0^t g(s)\,ds, \qquad t\geq 0,
$$
where $g: [0,\infty) \to [0,\infty)$ has the following properties:
\begin{itemize}
\item[(i)] $g(0)=0$,
\item[(ii)] $g(s)>0$ for $s>0$,
\item[(iii)] $g$ is right continuous at any point $s\geq 0$,
\item[(iv)] $g$ is nondecreasing on $(0,\infty)$.
\end{itemize}

The following lemma provides several useful properties on Young functions.

\begin{lemma} \cite[Lemma 3.2.2]{FucJohnKuf}.
A Young function $G$ is continuous, nonnegative, strictly increasing
and convex on $[0,\infty)$. Moreover,
\begin{itemize}
\item[(i)] $\displaystyle G(0)=0 \quad \text{and}\quad \lim_{t\to+\infty} G(t)=\infty$;
\item[(ii)]  $\displaystyle\lim_{t\to 0^+} \frac{G(t)}{t}=0 \quad \text{and}\quad \lim_{t\to +\infty} \frac{G(t)}{t}=\infty$.
\end{itemize}
\end{lemma}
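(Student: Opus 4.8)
The plan is to verify each asserted property directly from the integral representation $G(t)=\int_0^t g(s)\,ds$ together with the four structural hypotheses (i)--(iv) on $g$, dispatching the elementary statements first and reserving convexity and the growth limits for the end.

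First I would settle nonnegativity and the boundary value: since $g\geq 0$ on $[0,\infty)$, the integral $G(t)=\int_0^t g(s)\,ds$ is nonnegative for every $t\geq 0$, and clearly $G(0)=0$. For continuity I would exploit that a nondecreasing real-valued $g$ is bounded on each compact interval $[0,T]$ (by $g(T)$, using (iv)), so that $|G(t)-G(t')|=\bigl|\int_{t'}^{t} g(s)\,ds\bigr|\leq g(T)\,|t-t'|$ for $t,t'\in[0,T]$; thus $G$ is locally Lipschitz, hence continuous on $[0,\infty)$. Strict monotonicity then follows by writing, for $0\leq t_1<t_2$, the increment $G(t_2)-G(t_1)=\int_{t_1}^{t_2} g(s)\,ds$ and observing that the integrand is strictly positive on the nondegenerate subinterval $(\max\{t_1,0\},t_2)$ by (ii), so the increment is strictly positive.

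The first genuinely structural step is convexity. The clean route is to note that for $0\leq a<b$ the difference quotient is the mean value of $g$,
$$
\frac{G(b)-G(a)}{b-a}=\frac{1}{b-a}\int_a^b g(s)\,ds,
$$
and that, $g$ being nondecreasing, these averages over $[a,b]$ are themselves nondecreasing in both $a$ and $b$. The resulting monotonicity of secant slopes is exactly the classical characterization of convexity: at $t_\lambda=\lambda t_1+(1-\lambda)t_2$ the comparison $\frac{G(t_\lambda)-G(t_1)}{t_\lambda-t_1}\leq\frac{G(t_2)-G(t_\lambda)}{t_2-t_\lambda}$ rearranges to Jensen's inequality $G(t_\lambda)\leq\lambda G(t_1)+(1-\lambda)G(t_2)$, which I would spell out to conclude that $G$ is convex.

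It remains to treat the four limits. For $G(t)\to\infty$, I would fix any $s_0>0$ and use $g(s)\geq g(s_0)>0$ for $s\geq s_0$ (monotonicity plus positivity) to obtain $G(t)\geq g(s_0)(t-s_0)$ for $t\geq s_0$, which diverges. For $G(t)/t\to 0$ as $t\to 0^+$ I would sandwich $0\leq \tfrac{1}{t}\int_0^t g(s)\,ds\leq g(t)$, the upper bound following from $g(s)\leq g(t)$ for $s\leq t$, and then let $t\to 0^+$ invoking right continuity at $0$ and $g(0)=0$. For $G(t)/t\to\infty$ as $t\to\infty$ I would restrict the integral to the upper half and estimate $\tfrac{1}{t}\int_{t/2}^{t} g(s)\,ds\geq \tfrac12 g(t/2)$. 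The main obstacle is precisely this last limit: it is the only assertion not implied by (i)--(iv) alone, since a bounded nondecreasing $g$ would force a finite limit. It requires the unboundedness $\lim_{s\to\infty} g(s)=+\infty$ that belongs to the standard Young/$N$-function definition; I would make that hypothesis explicit, after which the half-interval estimate immediately closes the proof.
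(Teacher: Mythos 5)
Your proof is correct, and each step checks out: the local Lipschitz bound gives continuity, positivity of $g$ on $(0,\infty)$ gives strict monotonicity, the monotone-secant-slope argument is a valid proof of convexity, and the three integral estimates handle the limits. Note that the paper itself offers no proof of this lemma at all --- it is quoted from \cite[Lemma 3.2.2]{FucJohnKuf} --- so your self-contained elementary verification is a genuinely different route from the paper's appeal to the literature; what it buys is transparency about exactly which hypotheses on $g$ are used where. Your most valuable observation is the last one: the four properties (i)--(iv) of $g$ listed in the paper's definition really do not imply $\lim_{t\to+\infty} G(t)/t=\infty$. For instance $g(s)=\min\{s,1\}$ satisfies all four, yet then $G(t)/t\to 1$. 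The discrepancy is resolved by noting that the definition of Young function in the cited book (and in the standard Orlicz-space literature) includes the additional coercivity requirement $\lim_{s\to\infty}g(s)=\infty$, which the paper's abbreviated list silently drops; under that hypothesis your half-interval estimate $G(t)/t\geq \tfrac{1}{2}\,g(t/2)$ closes the argument exactly as you say. So your proposal is not merely correct --- it flags a genuine (if harmless, since the intended definition is the standard one) imprecision in the paper's stated hypotheses.
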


For our purposes we consider Young functions satisfying the so-called $\Delta_2-$condition.
\begin{definition}
A Young function $G$ satisfies the $\Delta_2$ condition (or doubling condition) if
$$
  G(2t)\leq C G(t)
$$
for all $t\geq 0$ for a fixed positive constant $C$.
\end{definition}
In particular, (cf. \cite[Theorem 3.4.4]{FucJohnKuf}) a Young function $G$ satisfies the $\Delta_2-$condition if and only if
$$
\limsup_{t \to \infty} \frac{t G^{\prime}(t)}{G(t)} <\infty.
$$
It is worth mentioning that such a kind of growing condition appears naturally when studying Orlicz-Sobolev spaces. Furthermore, we must compare such a condition with one considered in the Lieberman's pioneering work \cite{Lieberman},
\begin{equation}\label{Liebcond}
  g^{-}-1\leq \frac{tg^{\prime}(t)}{g(t)} \leq g^+-1   \quad \forall\,\, t>0,
\end{equation}
for constants $0<g^{-}\leq g^+<\infty$, which establishes regularity estimates for weak solutions in Orlicz-Sobolev spaces. In fact, it is straightforward to see that such a condition implies that
$$
   g^{-} \leq \frac{tG^{\prime}(t)}{G(t)} \leq g^+  \quad \forall\,\, t>0.
$$

Finally, the following version of the triangle inequality for Young functions holds.

\begin{lemma}[{\cite[Lemma 2.6]{BondSal}}]\label{Lemma2.3}
Let $G$ be a Young function. Then for every $\eta>0$ there exists $C_\eta>0$ such that
$$
G(a+b)\leq C_\eta G(a)+ (1+\eta)^{g^+}G(b)  \quad a,b>0.
$$
\end{lemma}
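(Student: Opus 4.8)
The plan is to reduce the whole statement to a single scaling estimate and then close the argument by a dichotomy on the relative sizes of $a$ and $b$. The crucial preliminary observation is that the upper Lieberman bound $\frac{tG'(t)}{G(t)}\le g^+$ (valid for all $t>0$, as recorded above) yields the homogeneity-type inequality
$$
G(\lambda t)\le \lambda^{g^+}G(t)\qquad\text{for every }\lambda\ge 1,\ t>0.
$$
To obtain this I would fix $t>0$, set $\psi(\lambda):=G(\lambda t)$, and note that $\psi$ is absolutely continuous (since $G$ is convex, hence locally Lipschitz) with $\psi'(\lambda)=t\,g(\lambda t)$ for a.e. $\lambda$. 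Then $\frac{\psi'(\lambda)}{\psi(\lambda)}=\frac{(\lambda t)\,g(\lambda t)}{\lambda\,G(\lambda t)}\le \frac{g^+}{\lambda}$, and integrating this differential inequality from $1$ to $\lambda$ gives $\log\psi(\lambda)-\log\psi(1)\le g^+\log\lambda$, which is exactly the claimed bound after exponentiation.

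With the scaling inequality in hand I would fix $\eta>0$ and split into two complementary regimes according to whether $a\le \eta\, b$ or $a>\eta\, b$. In the first regime $a\le \eta b$ one has $a+b\le(1+\eta)b$, so by monotonicity of $G$ and the scaling estimate with $\lambda=1+\eta$,
$$
G(a+b)\le G\big((1+\eta)b\big)\le (1+\eta)^{g^+}G(b)\le C_\eta G(a)+(1+\eta)^{g^+}G(b),
$$
the last inequality being trivial since $C_\eta G(a)\ge 0$. In the second regime $a>\eta b$ one has $b<a/\eta$, hence $a+b<\big(1+\tfrac1\eta\big)a$, and again by monotonicity and scaling with $\lambda=1+\tfrac1\eta\ge 1$,
$$
G(a+b)\le G\Big(\big(1+\tfrac1\eta\big)a\Big)\le \big(1+\tfrac1\eta\big)^{g^+}G(a)\le C_\eta G(a)+(1+\eta)^{g^+}G(b),
$$
provided we set $C_\eta:=\big(1+\tfrac1\eta\big)^{g^+}$. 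Combining the two regimes yields the asserted inequality for all $a,b>0$ with this explicit constant $C_\eta$.

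The only genuinely delicate point is the derivation of the scaling inequality $G(\lambda t)\le\lambda^{g^+}G(t)$ from the pointwise quotient bound, since $g=G'$ need not be continuous and $G$ is differentiable only almost everywhere. I would handle this by invoking the absolute continuity of convex functions, so that the fundamental theorem of calculus applies to $\log\psi$, the differential inequality holding for a.e. $\lambda$ and integrating to the desired logarithmic estimate; no regularity beyond the defining properties of a Young function together with the $\Delta_2$/Lieberman bound is needed. Once this scaling estimate is secured, the dichotomy argument is entirely elementary, and the apparently asymmetric roles of the two constants $C_\eta$ and $(1+\eta)^{g^+}$ are precisely matched by the two regimes $a>\eta b$ and $a\le\eta b$.
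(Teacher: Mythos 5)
Your proof is correct. Note that the paper itself gives no argument for this lemma: it is quoted verbatim from \cite[Lemma 2.6]{BondSal}, so there is no in-paper proof to compare against. Your argument is essentially the standard one, and it is the same scheme used in the cited reference: first the scaling estimate $G(\lambda t)\le \lambda^{g^+}G(t)$ for $\lambda\ge 1$, obtained by integrating the logarithmic differential inequality coming from the bound $tG'(t)/G(t)\le g^+$, and then the dichotomy $a\le\eta b$ versus $a>\eta b$, which produces the asymmetric constants $(1+\eta)^{g^+}$ and $C_\eta=\bigl(1+\tfrac1\eta\bigr)^{g^+}$ exactly as in your write-up. Your handling of the measure-theoretic point (local Lipschitz continuity of the convex function $G$, hence absolute continuity of $\log\psi$, so the a.e. differential inequality integrates) is sound, since $\psi(\lambda)=G(\lambda t)\ge G(t)>0$ keeps the logarithm well defined. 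One point worth making explicit, which your proof implicitly and correctly identifies: the hypothesis ``$G$ is a Young function'' alone is not enough for the stated inequality (consider $G(t)=e^t-t-1$, which admits no bound of the form $G(2t)\le CG(t)$); what is really needed is the uniform upper bound $tG'(t)/G(t)\le g^+$ recorded in the paper just before the lemma, i.e.\ the Lieberman-type condition \eqref{Liebcond}, and this is precisely the only property of $G$ your argument uses.
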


\begin{example} Some well-known examples of Young functions include the following:
\begin{enumerate}
  \item $G(t) = t^p$ for $p>1$;
  \item $G(t) = t^p(\mathfrak{a}|\log t|+\mathfrak{b})$ for $p>1$ and $\mathfrak{a}, \mathfrak{b}>0$;
  \item $G(t) = \frac{t^p}{\mathfrak{a}\log (t+e)+\mathfrak{b}}$ for $p>1$ and $\mathfrak{a}, \mathfrak{b}>0$;
  \item If $G_1$ and $G_2$ are Young functions, then the composition $G(t) = (G_1\circ G_2)(t)$  is also a Young function;
\item Finite linear combinations of Young functions with non-negative coefficients are also Young functions;
\item The maximum among a finite family of Young functions is also a Young function.
\end{enumerate}
\end{example}

\subsection{Orlicz-Sobolev spaces and traces}

Given a Young function $G$ and a bounded open set $\Omega$ we consider the spaces $L^G(\Omega)$ and $W^{1,G}(\Omega)$ defined as follows:
\begin{align*}
L^G(\Omega) & :=\{ u : \R\to \R \text{ measurable such that }\Phi_{G,\Omega}(u)<\infty\},\\
W^{1,G}(\Omega)& :=\{ u \in L^{G}(\Omega) \text{ such that }
\Phi_{G,\Omega}(|\nabla u|) <\infty\},
\end{align*}
where $\nabla u$ is considered in the distributional sense and the
\textit{modular} $\Phi_{G,\Omega}$ stands for
$$
\Phi_{G,\Omega}(u)= \int_\Omega G(|u|)\,dx.
$$
These spaces are endowed with the so-called \textit{Luxemburg norm} defined
as follows
$$
\|u\|_{L^G(\Omega)}=\inf \left\{ \lambda>0 : \Phi_{G, \Omega}\left(
\frac{u}{\lambda}\right)\leq 1 \right\}
$$
and
$$
\|u\|_{W^{1,G}(\Omega)} = \|u\|_{L^G(\Omega)} + \|\nabla
u\|_{L^G(\Omega)}.
$$

It is worth highlighting that the spaces $L^G(\Omega)$ and
$W^{1,G}(\Omega)$ are reflexive and separable Banach spaces if and only if $G$ and $\tilde{G}$ satisfy the $\Delta_2-$condition (cf. \cite[Theorem 8.20]{AdamsFour}, \cite[Theorem 3.13.9]{FucJohnKuf} and  \cite[page
226]{KrasnRut}, where
$$
  \displaystyle \tilde{G}(t): = \sup_{s\geq0}\{st-G(s)\}
$$
denotes the complementary function (or Young conjugate) to $G$ (cf. \cite{Adams77, Cianchi96, Cianc, DonTrun}). From now on, we will also assume that $\tilde{G}$ satisfies the $\Delta_2-$condition (cf. \cite[Theorem 3.4.7]{FucJohnKuf} and \cite[page 5]{KrasnRut}).

Now, we introduce the notion of weak solution.
\begin{definition}\label{soludebil}
A function $u\in X_W$ is said to be a \textit{weak solution} to
\begin{equation}\label{MainEq}
  \left\{
\begin{array}{rclcl}
  -\div\left(\frac{G^{\prime}(|\nabla u|)}{|\nabla u|}\cdot \nabla u\right) + \frac{g(u)u}{|u|} & = & 0 & \text{in} & \Omega \setminus \{u=0\}\\
  g(|\nabla u|)\frac{\nabla u}{|\nabla u|}\cdot\eta & = &  S_{G,H}(\mathrm{W})
\frac{h(u)u}{|u|} & \text{on} & \partial \Omega\setminus\mathrm{W} \\
  u & = & 0 & \text{on} & \mathrm{W},
\end{array}
\right.
\end{equation}
if
$$
\int_\Omega g(|\nabla u|)\frac{\nabla u}{|\nabla u|}\cdot\nabla \phi + \frac{g(u)u\phi}{|u|}\,dx=S_{G,H}(\mathrm{W})
\int_{\partial \Omega} \frac{h(u)u\phi}{|u|}\,d\mathcal{H}^{N-1},
$$
for every test function $\phi\in C^{\infty}(\Omega)$. Here $\nabla u \cdot\eta$ is the outer unit normal derivative.
\end{definition}

\begin{remark}\label{remarkk}
The following statements hold true:

\begin{enumerate}
  \item From the available regularity theory from \cite{Kor} and \cite[Ch.5]{Lieberman}, any minimizer $u_0$ of \eqref{eqMin} fulfills that $u_0 \in C_{\text{loc}}^{1,\beta}(\Omega)$ for some $0 < \beta <1$.
  \item From \cite{Lieberman88}, if $\partial \Omega \setminus \overline{\mathrm{W}} \in C^{1, \gamma}$, then $u \in C^{1, \gamma}(\overline{\Omega}\setminus \mathrm{W})$ up to the boundary, for some $0<\gamma<1$.
  \item Notice that, if $u_0$ is minimizer to \eqref{eqMin}, then  $|u_0|$ as well. Hence, $v = |u_0|$ is a weak solution of
      $$
\left\{
\begin{array}{rclcl}
  -\div\left(\frac{G^{\prime}(|\nabla v|)}{|\nabla v|}\cdot \nabla v\right) + g(v) & = & 0 & \text{in} & \Omega \setminus \{v=0\}\\
  g(|\nabla v|)\frac{\nabla v}{|\nabla v|}\cdot\eta & = &  S_{G,H}(\mathrm{W})
h(v) & \text{on} & \partial \Omega\setminus\mathrm{W} \\
  v & = & 0 & \text{on} & \mathrm{W}.
\end{array}
\right.
$$
Thus, by using the maximum principle (cf. \cite{Mon}), we obtain that $u_0$ does not change its sign. Consequently, we can always assume that
$$
  u_0>0 \quad \text{in} \quad \Omega \qquad \text{and} \qquad u_0\geq0 \quad \text{on} \quad \partial \Omega.
$$
\item From Hopf's Lemma (cf. \cite[Theorem 1]{MiSh}) and boundary regularity results in \cite{Lieberman88} we obtain that any nonnegative solution $u_0$ to \eqref{MainEq} satisfies
    $$
    u_0>0 \qquad \text{in} \qquad \overline{\Omega} \setminus \overline{\mathrm{W}}.
    $$
\end{enumerate}
\end{remark}

From now on, we will assume the following conditions on the Young function $G$:
\begin{equation}\label{Condcompac}
\int_0^1 \frac{G^{-1}(s)}{s^{1+\frac{1}{N}}}\, ds<\infty \text{ and
} \int_{1}^{\infty} \frac{G^{-1}(s)}{s^{1+\frac{1}{N}}}\,ds=\infty,
\end{equation}
which enable us to access to a compactness result regarding traces. For this purpose
we recall that the Orlicz-Sobolev conjugate of $G$ (see
\cite[p. 352]{FucJohnKuf}) is defined as follows
$$
(G^{\ast})^{-1}(t)=\int_0^t\frac{(G)^{-1}(s)}{s^{1+\frac{1}{N}}}\, ds.
$$

We say that $\mathfrak{A}$ increases more slowly than $\mathfrak{B}$ and   denote
$\mathfrak{A}<<\mathfrak{B}$ if and only if
$$
 \displaystyle \lim_{t\to \infty} \frac{\mathfrak{A}(t)}{\mathfrak{B}(\lambda t)}=0
$$
for any $\lambda>0$.

Finally, the following result give us the compactness of the trace in the Orlicz-Sobolev space $W^{1,G}(\Omega)$.

\begin{theorem}[{\cite[Theorem 7.4.6]{FucJohnKuf}}]\label{compacidad}
Let $\Omega\subset \R^N$, $N\geq 2$, be a $C^{0,1}$ domain. Let $G$
be a Young function  satisfying the conditions
\eqref{Condcompac}, then the embedding
$$
  W^{1,G}(\Omega)\hookrightarrow L^H(\partial \Omega)
$$
is compact for every Young function $H$ such that $H<<\Psi$, where
$$
\Psi(t)=(G^{\ast}(t))^{\frac{N-1}{N}}.
$$
\end{theorem}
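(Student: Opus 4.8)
The plan is to reduce the statement to a flat, half-space model by boundary charts, and then split the argument into a continuous \emph{critical} embedding, a compact \emph{coarse} embedding, and a transfer step that exploits the strict growth gap $H\ll\Psi$. Since $\Omega$ is a bounded $C^{0,1}$ domain, $\partial\Omega$ is covered by finitely many Lipschitz charts; fixing a subordinate smooth partition of unity $\{\phi_j\}$ and flattening each chart reduces every assertion to a model half-cube $Q^+ = Q' \times (0,1)$ with trace taken on $Q' \times \{0\}$, because a finite sum of continuous (resp. precompact) maps is continuous (resp. precompact). Throughout, the two conditions in \eqref{Condcompac} are what make the Orlicz–Sobolev conjugate meaningful: convergence of $\int_0^1 G^{-1}(s)s^{-1-1/N}\,ds$ guarantees that $(G^{\ast})^{-1}$, hence $G^{\ast}$ and $\Psi(t)=(G^{\ast}(t))^{(N-1)/N}$, is a genuine Young function, while divergence of $\int_1^\infty$ places us in the subcritical trace regime, so the target is an Orlicz space of functions rather than a borderline/exponential class.

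\textbf{Step 1 (continuous critical trace embedding).} First I would prove the continuous embedding $W^{1,G}(\Omega)\hookrightarrow L^\Psi(\partial\Omega)$. On the model $Q^+$, for smooth $u$ one writes the boundary value along normal fibers, $u(x',0)=u(x',t)-\int_0^t \partial_{x_N}u(x',s)\,ds$, averages in $t$, and combines the resulting identity with an Orlicz–Sobolev inequality in the tangential variables; the exponent $\tfrac{N-1}{N}$ defining $\Psi$ arises precisely from the loss of one dimension in the trace. This yields a modular estimate $\int_{\partial\Omega}\Psi(|u|)\,d\mathcal{H}^{N-1}\le C\,(\Phi_{G,\Omega}(u)+\Phi_{G,\Omega}(|\nabla u|)+1)$, hence boundedness of the trace operator into $L^\Psi(\partial\Omega)$ after invoking density of smooth functions (valid since $G,\tilde G\in\Delta_2$).

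\textbf{Step 2 (compact coarse embedding).} Next I would establish that the trace $W^{1,G}(\Omega)\to L^1(\partial\Omega)$ is compact. Because $G$ is superlinear (the Orlicz analogue of $p>1$), the trace of a $W^{1,G}$ function gains fractional regularity on the boundary manifold: it lies in an Orlicz–Besov type class on $\partial\Omega$ that embeds compactly into $L^1(\partial\Omega)$ by a Rellich–Kondrachov argument on the compact $(N-1)$-manifold $\partial\Omega$. Concretely, for a bounded sequence $(u_n)$ in $W^{1,G}(\Omega)$, one extracts (reflexivity, since $G,\tilde G\in\Delta_2$) a weak limit $u_n\rightharpoonup u$, uses the interior Rellich theorem to get $u_n\to u$ strongly in $L^G(\Omega)$, and then controls tangential translations of the traces by a Kolmogorov–Riesz–Fréchet criterion to conclude $\mathrm{Tr}(u_n)\to \mathrm{Tr}(u)$ in $L^1(\partial\Omega)$, hence $\mathcal{H}^{N-1}$-a.e.\ along a subsequence.

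\textbf{Step 3 (transfer via the growth gap).} Finally, given $H\ll\Psi$, I would upgrade the $L^1$ convergence of Step 2 to convergence in $L^H(\partial\Omega)$. Writing $v_n=\mathrm{Tr}(u_n)-\mathrm{Tr}(u)$, these are bounded in $L^\Psi(\partial\Omega)$ by Step 1 and converge to $0$ in $L^1$ and a.e.\ by Step 2. The condition $H\ll\Psi$ yields, for every $\varepsilon>0$, a splitting $H(\tau)\le \varepsilon\,\Psi(\tau/\lambda)+C_\varepsilon$ for large $\tau$, so the family $\{H(|v_n|)\}$ is equi-integrable on $\partial\Omega$; combined with $v_n\to 0$ a.e., Vitali's convergence theorem gives $\int_{\partial\Omega}H(|v_n|)\,d\mathcal{H}^{N-1}\to 0$, i.e.\ modular convergence, which is equivalent to norm convergence in $L^H$ since $H\in\Delta_2$. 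This proves precompactness of the trace into $L^H(\partial\Omega)$, hence compactness of the embedding. The hard part is Steps 1 and 2: the sharp continuous inequality into $L^\Psi$ requires the correct one-dimensional Orlicz reduction and a careful use of \eqref{Condcompac}, while the coarse compactness rests on the fractional (Orlicz–Besov) regularity of boundary traces and a Rellich argument on $\partial\Omega$; Step 3, by comparison, is a soft equi-integrability argument in which the strict domination $H\ll\Psi$ plays the decisive role.
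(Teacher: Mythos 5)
The paper contains no proof of this statement: Theorem \ref{compacidad} is quoted verbatim from Fucik--John--Kufner \cite[Theorem 7.4.6]{FucJohnKuf} and used as a black box, so there is no internal argument to compare yours against. Judged on its own merits, your outline follows the standard (and correct) strategy for subcritical compactness theorems of this kind: localization to Lipschitz charts, a continuous embedding into the critical Orlicz space $L^\Psi(\partial\Omega)$ with $\Psi=(G^{\ast})^{\frac{N-1}{N}}$, compactness into a coarse space, and a Vitali/equi-integrability transfer that converts the strict growth gap $H\ll\Psi$ into compactness in $L^H(\partial\Omega)$. Step 3 in particular is exactly the right way to use the hypothesis $H\ll\Psi$, and your reading of \eqref{Condcompac} (convergence at $0$ makes $G^\ast$ and $\Psi$ genuine Young functions, divergence at $\infty$ keeps the target subcritical) is accurate.

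Two repairs are needed before the outline is airtight. First, in Step 3 you pass from modular to norm convergence ``since $H\in\Delta_2$'', but the theorem does not assume $\Delta_2$ for $H$. The fix is to observe that the relation $\ll$, as defined in the paper, is stable under dilation: $H\ll\Psi$ implies $H(\cdot/\lambda)\ll\Psi$ for every $\lambda>0$, so your equi-integrability/Vitali argument applied at each scale $\lambda$ gives $\int_{\partial\Omega}H(|v_n|/\lambda)\,d\mathcal{H}^{N-1}\to 0$ for all $\lambda>0$, which is norm convergence with no $\Delta_2$ assumption at all. Second, Step 2 is the genuinely delicate point, and your sketch leans on heavy unproved machinery (Orlicz--Besov trace characterizations and a fractional Rellich theorem on $\partial\Omega$). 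Note that compactness of the trace into $L^1(\partial\Omega)$ is \emph{false} at the borderline $W^{1,1}$: extending boundary data $g_n$ that concentrate to a Dirac mass via Gagliardo's extension operator produces a bounded sequence in $W^{1,1}(\Omega)$ whose traces have no $L^1$-convergent subsequence. So the superlinearity of $G$ must enter quantitatively, not just rhetorically. A cleaner, self-contained route: boundedness of $\Phi_{G,\Omega}(|\nabla u_n|)$ plus superlinearity of $G$ gives, by de la Vall\'ee Poussin, equi-integrability of $\{|\nabla u_n|\}$ in $L^1(\Omega)$; combining the collar trace estimate
$$
\int_{\partial\Omega}|v|\,d\mathcal{H}^{N-1}\le C\left(\frac{1}{\epsilon}\int_{\Omega}|v|\,dx+\int_{\Omega_\epsilon}|\nabla v|\,dx\right),
$$
where $\Omega_\epsilon$ is a boundary collar of width $\epsilon$, with the interior Rellich convergence $u_n\to u$ in $L^1(\Omega)$, applied to $v=u_n-u_m$, shows the traces are Cauchy in $L^1(\partial\Omega)$: the gradient term is uniformly small because $\Leb(\Omega_\epsilon)\to 0$ and the gradients are equi-integrable. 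With these two adjustments your proposal is a complete and correct proof strategy.
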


\section{Minimizers and optimal shapes}\label{PropMin}

This section will be devoted to establish   existence of extremals and optimal shapes for our optimization problem.

From now on, a set $\Omega\subset \R^N$ for $N\geq 2$, will denote a
$C^{0,1}$ open bounded domain. The following result shows existence
of minimizer for $S_{G,H}(\mathrm{W})$.

\begin{theorem}[{\bf Existence of minimizers}]\label{ExistThm}
The constant $S_{G,H}(\mathrm{W})$ in \eqref{eqMin} is achieved for some function $u_0\in \mathrm{X}_{\mathrm{W}}$.
\end{theorem}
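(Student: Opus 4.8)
The plan is to apply the direct method of the calculus of variations. First I would fix a minimizing sequence $(u_n)\subset \mathrm{X}_{\mathrm{W}}$ satisfying the normalization $\int_{\partial\Omega}H(|u_n|)\,d\mathcal{H}^{N-1}=1$ and
$$
\int_\Omega G(|\nabla u_n|)+G(|u_n|)\,dx \longrightarrow S_{G,H}(\mathrm{W}).
$$
In particular the modular $\Phi_{G,\Omega}(u_n)+\Phi_{G,\Omega}(|\nabla u_n|)$ is uniformly bounded; since $G$ satisfies the $\Delta_2$-condition, boundedness of the modular is equivalent to boundedness of the Luxemburg norm, so $(u_n)$ is bounded in $W^{1,G}(\Omega)$. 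Because both $G$ and $\tilde G$ satisfy $\Delta_2$, the space $W^{1,G}(\Omega)$ is reflexive, and I can extract a subsequence (not relabelled) with $u_n\rightharpoonup u_0$ weakly in $W^{1,G}(\Omega)$.

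Next I would pass to the limit both in the functional and in the constraint. The energy $u\mapsto\int_\Omega G(|\nabla u|)+G(|u|)\,dx$ is convex and strongly continuous on $W^{1,G}(\Omega)$ (as $G$ is convex), hence weakly lower semicontinuous, which yields
$$
\int_\Omega G(|\nabla u_0|)+G(|u_0|)\,dx \le \liminf_{n\to\infty}\int_\Omega G(|\nabla u_n|)+G(|u_n|)\,dx = S_{G,H}(\mathrm{W}).
$$
For the constraint I would invoke the compact trace embedding of Theorem \ref{compacidad}: since $H<<\Psi$, the embedding $W^{1,G}(\Omega)\hookrightarrow L^H(\partial\Omega)$ is compact, so $u_n\to u_0$ strongly in $L^H(\partial\Omega)$. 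Using the $\Delta_2$-condition on $H$ (continuity of the boundary modular along $L^H$-convergent sequences), this gives $\int_{\partial\Omega}H(|u_0|)\,d\mathcal{H}^{N-1}=1$, so the normalization is preserved in the limit.

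It then remains to check that $u_0$ is an admissible competitor, i.e. $u_0\in\mathrm{X}_{\mathrm{W}}$. The vanishing condition $u_0=0$ $\mathcal{H}^{N-1}$-a.e. on $\mathrm{W}$ follows from the strong $L^H(\partial\Omega)$-convergence of the traces, since each trace of $u_n$ vanishes on $\mathrm{W}$ and the limit inherits this property $\mathcal{H}^{N-1}$-a.e. on $\mathrm{W}$. Moreover the normalization $\int_{\partial\Omega}H(|u_0|)\,d\mathcal{H}^{N-1}=1>0$ forces the trace of $u_0$ to be nontrivial, so $u_0\notin W_0^{1,G}(\Omega)$ and hence $u_0\in\mathrm{X}$. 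Consequently $u_0\in\mathrm{X}_{\mathrm{W}}$ is admissible, and comparing the reverse inequality (its energy is $\ge S_{G,H}(\mathrm{W})$ by definition of the infimum) with the lower semicontinuity bound yields $\int_\Omega G(|\nabla u_0|)+G(|u_0|)\,dx=S_{G,H}(\mathrm{W})$.

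The hard part will be guaranteeing that the constraint does not degenerate in the limit, that is, that the boundary normalization survives and the limit does not collapse into $W_0^{1,G}(\Omega)$. This is precisely where the \emph{compactness} of the trace embedding (rather than mere continuity) is essential: compactness upgrades weak convergence in $W^{1,G}(\Omega)$ to strong convergence of the traces in $L^H(\partial\Omega)$, which both transmits the normalization $\int_{\partial\Omega}H(|u_0|)\,d\mathcal{H}^{N-1}=1$ and preserves the vanishing set $\mathrm{W}$. The remaining verifications (modular/norm equivalence and weak lower semicontinuity of a convex integrand) are routine consequences of the $\Delta_2$-framework recorded in Section \ref{Prelim}.
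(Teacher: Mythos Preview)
Your proposal is correct and follows essentially the same direct-method argument as the paper: minimizing sequence, boundedness via the modular/norm equivalence under $\Delta_2$, reflexivity for a weakly convergent subsequence, compact trace embedding (Theorem~\ref{compacidad}) to upgrade to strong $L^H(\partial\Omega)$-convergence, preservation of the normalization and of the vanishing set $\mathrm{W}$, and weak lower semicontinuity of the convex energy. The only cosmetic difference is that the paper passes the boundary normalization via the unit-ball identity $\Phi_{H,\partial\Omega}(u)=1\iff\|u\|_{L^H(\partial\Omega)}=1$ (Remark~\ref{equivmodulares}) rather than invoking $\Delta_2$ on $H$, and it extracts $\mathcal{H}^{N-1}$-a.e.\ boundary convergence explicitly to recover $u_0=0$ on $\mathrm{W}$; your route through strong $L^H$-convergence (and a further a.e.\ subsequence) achieves the same conclusions.
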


\begin{proof}
The proof follows as a consequence of the direct method in the calculus of variations. Indeed, take a minimizing sequence $\{u_k\}_{k\in \N}$ of $S_{G,H}(\mathrm{W})$. That is
$$
S_{G,H}(\mathrm{W})=\lim_{k\to\infty}\Phi_{G,\Omega}(|\nabla u_k|)+\Phi_{G,\Omega}(u_k),
$$
$\displaystyle \int_{\partial\Omega}H(|u_k|)d\mathcal{H}^{N-1}=1$, and $u_k=0$ $\mathcal{H}^{N-1}-$ a.e. on $\mathrm{W}$ for all $k\in \N$. The above limit tell us that there exists a positive constant $C$ such that
$$
\|u_k\|_{W^{1,G}(\Omega)}\leq C,\quad \forall k\in \N.
$$
Now, from the reflexivity of the space $W^{1,G}(\Omega)$, and by   Theorem \ref{compacidad} there exists a function $u\in W^{1,G}(\Omega)$ such that, up to a subsequence,
\begin{align}\label{debil}
&u_k\rightharpoonup u_0 \text{ weakly  in }   W^{1,G}(\Omega),\\
&\label{fuerteborde1}
u_k\to u_0 \text{ strongly  in }L^{H}(\partial \Omega),\\
&\label{ctp1}
u_k\to u_0\text{ a.e. in }\partial \Omega \text{ (see \cite[p. 200]{FucJohnKuf} for details)}.
\end{align}
Note that \eqref{fuerteborde1} implies (see Remark \eqref{equivmodulares})
$$
\int_{\partial\Omega}H(|u_0|)d\mathcal{H}^{N-1}=1.
$$
Observe that the above implies that $u_0$ can not be zero over the complete boundary $\partial\Omega$, in this way $u_0\in W^{1,G}(\Omega)\setminus W_0^{1,G}(\Omega)$. Moreover, by \eqref{ctp1} we get that $u_0=0\, \mathcal{H}-$a.e on $\mathrm{W}$. Then,
$$
S_{G,H}(\mathrm{W})\leq \Phi_{G,\Omega}(|\nabla u_0|)+\Phi_{G,\Omega}(u_0).
$$
Finally, from the convexity of $G$, the weakly lower semicontinuity of the application $v \mapsto \Phi_{G,\Omega} (|\nabla v|)+\Phi_{G,\Omega}(v)$ and the Fatou's lemma (cf. \cite[Theorems 2.1.17, 2.2.8 and Lemma 2.3.16]{DHHR}) we get
$$
\begin{array}{rcl}
  \Phi_{G,\Omega} (|\nabla u_0|)+ \Phi_{G,\Omega}(u_0) & \leq & \displaystyle \liminf_{k\to\infty} \left[\Phi_{G,\Omega} (|\nabla u_k|)+
\Phi_{G,\Omega}(u_k)\right] \\
   & = & S_{G,H}(\mathrm{W}).
\end{array}
$$
This proves the theorem.
\end{proof}

\begin{remark}\label{equivmodulares}
From \cite[Lemma 2.1.14]{DHHR} we know that
$\Phi_{H,\partial\Omega}(u_k)=1$ if and only if
$\|u_k\|_{L^H(\partial\Omega)}=1$ for all $k$. Since
$\lim_{k\to\infty}\|u_k\|_{L^{H}(\partial\Omega)}=\|u_0\|_{L^H(\partial\Omega)},$
we obtain that $\|u_0\|_{L^H(\partial\Omega)}=1$, and one more time
using \cite[Lemma 2.1.14]{DHHR} we conclude that
$\Phi_{H,\partial\Omega}(u_0)=1$.
\end{remark}

By following the approach from \cite[Theorem 1.1]{BRW2}, we will prove that $S_{G, H}(\mathrm{W})$ is a lower semi-continuous map with respect to the window. As a result, we obtain   existence of an optimal shape (window) for our optimization problem.

\begin{theorem}[{\bf Existence of optimal shapes}] Let $(\mathrm{W_{\tau}})_{\tau>0} \subset \partial \Omega$ be a family of positive $\mathcal{H}^{N-1}-$measurable subsets and $\mathrm{W}_0 \subset \partial \Omega$ be a positive $\mathcal{H}^{N-1}-$measurable set, such that
$$
\chi_{W_{\tau}} \rightharpoonup \chi_{W_0} \quad \ast-\text{weakly in}\,\,\,L^{\infty}(\partial \Omega).
$$
Then,
$$
  \displaystyle S_{G, H}(\mathrm{W}_0) \leq \liminf_{\tau \to 0+} S_{G, H}(\mathrm{W}_{\tau}).
$$
\end{theorem}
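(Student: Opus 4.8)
The plan is to argue by the direct method, mirroring the proof of Theorem \ref{ExistThm}, with the essential new ingredient being the passage of the vanishing constraint to the limit by means of the weak-$*$ convergence of the characteristic functions.

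First I would assume $\liminf_{\tau\to 0^+} S_{G,H}(\mathrm{W}_\tau) < \infty$ (otherwise the asserted inequality is trivial) and select a sequence $\tau_k \to 0^+$ along which $S_{G,H}(\mathrm{W}_{\tau_k})$ converges to this $\liminf$. For each $k$, Theorem \ref{ExistThm} furnishes a minimizer $u_k \in \mathrm{X}_{\mathrm{W}_{\tau_k}}$, which we may take normalized so that $\int_{\partial\Omega} H(|u_k|)\, d\mathcal{H}^{N-1} = 1$, with $u_k = 0$ $\mathcal{H}^{N-1}$-a.e. on $\mathrm{W}_{\tau_k}$ and $\Phi_{G,\Omega}(|\nabla u_k|) + \Phi_{G,\Omega}(u_k) = S_{G,H}(\mathrm{W}_{\tau_k})$. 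Since the right-hand side stays bounded, the equivalence between boundedness of the modular and of the Luxemburg norm (guaranteed by the $\Delta_2$-condition on $G$) yields a uniform bound $\|u_k\|_{W^{1,G}(\Omega)} \le C$. Invoking reflexivity of $W^{1,G}(\Omega)$ together with the compact trace embedding of Theorem \ref{compacidad}, I would then extract a subsequence (not relabeled) and a limit $u_0 \in W^{1,G}(\Omega)$ with $u_k \rightharpoonup u_0$ weakly in $W^{1,G}(\Omega)$, $u_k \to u_0$ strongly in $L^H(\partial\Omega)$, and $u_k \to u_0$ $\mathcal{H}^{N-1}$-a.e. on $\partial\Omega$.

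Exactly as in Theorem \ref{ExistThm} (via Remark \ref{equivmodulares}), the strong trace convergence gives $\int_{\partial\Omega} H(|u_0|)\, d\mathcal{H}^{N-1} = 1$, so that $u_0 \in W^{1,G}(\Omega)\setminus W_0^{1,G}(\Omega)$, while the convexity of $G$ and the weak lower semicontinuity of $v \mapsto \Phi_{G,\Omega}(|\nabla v|) + \Phi_{G,\Omega}(v)$ furnish
$$
\Phi_{G,\Omega}(|\nabla u_0|) + \Phi_{G,\Omega}(u_0) \le \liminf_{k\to\infty} \left[\Phi_{G,\Omega}(|\nabla u_k|) + \Phi_{G,\Omega}(u_k)\right] = \liminf_{\tau\to 0^+} S_{G,H}(\mathrm{W}_\tau).
$$

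The crux — and the step I expect to be the main obstacle — is to verify that $u_0$ is \emph{admissible} for $S_{G,H}(\mathrm{W}_0)$, that is $u_0 = 0$ $\mathcal{H}^{N-1}$-a.e. on $\mathrm{W}_0$. I would extract this from the identity $\int_{\partial\Omega} |u_k|\, \chi_{\mathrm{W}_{\tau_k}}\, d\mathcal{H}^{N-1} = 0$ by passing to the limit in a product of a strongly and a weakly-$*$ convergent factor. Since $\partial\Omega$ has finite measure and $H$ is a Young function, the continuous inclusion $L^H(\partial\Omega) \hookrightarrow L^1(\partial\Omega)$ upgrades the strong trace convergence to $|u_k| \to |u_0|$ in $L^1(\partial\Omega)$; combined with $\chi_{\mathrm{W}_{\tau_k}} \rightharpoonup \chi_{\mathrm{W}_0}$ weak-$*$ in $L^\infty(\partial\Omega) = (L^1(\partial\Omega))^{*}$ (which in particular bounds $\|\chi_{\mathrm{W}_{\tau_k}}\|_{L^\infty}\le 1$), the splitting
$$
\int_{\partial\Omega} |u_k|\chi_{\mathrm{W}_{\tau_k}}\, d\mathcal{H}^{N-1} - \int_{\partial\Omega} |u_0|\chi_{\mathrm{W}_0}\, d\mathcal{H}^{N-1} = \int_{\partial\Omega} (|u_k|-|u_0|)\chi_{\mathrm{W}_{\tau_k}}\, d\mathcal{H}^{N-1} + \int_{\partial\Omega} |u_0|(\chi_{\mathrm{W}_{\tau_k}} - \chi_{\mathrm{W}_0})\, d\mathcal{H}^{N-1}
$$
shows that both terms on the right vanish as $k\to\infty$ (the first by the $L^\infty$ bound times $L^1$ convergence, the second by the weak-$*$ definition applied to the fixed $L^1$ function $|u_0|$). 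Hence $\int_{\partial\Omega} |u_0|\, \chi_{\mathrm{W}_0}\, d\mathcal{H}^{N-1} = 0$, forcing $u_0 = 0$ $\mathcal{H}^{N-1}$-a.e. on $\mathrm{W}_0$, i.e. $u_0 \in \mathrm{X}_{\mathrm{W}_0}$. With admissibility secured, the very definition of $S_{G,H}(\mathrm{W}_0)$ together with the displayed energy inequality gives
$$
S_{G,H}(\mathrm{W}_0) \le \Phi_{G,\Omega}(|\nabla u_0|) + \Phi_{G,\Omega}(u_0) \le \liminf_{\tau\to 0^+} S_{G,H}(\mathrm{W}_\tau),
$$
which is the claimed lower semicontinuity, and the existence of an optimal window then follows by applying this to a minimizing sequence of windows with prescribed measure $\alpha$ (using that the constraint $\mathcal{H}^{N-1}(\mathrm{W}_0)=\alpha$ is preserved under the weak-$*$ limit of characteristic functions).
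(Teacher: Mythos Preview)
Your proof of the lower semicontinuity statement is correct and follows essentially the same route as the paper: pass to a subsequence realizing the $\liminf$, take (normalized) minimizers $u_k$, extract a weak $W^{1,G}$ limit with strong $L^H(\partial\Omega)$ trace convergence, and verify $u_0\in \mathrm{X}_{\mathrm{W}_0}$ by pairing the strong $L^1$ convergence of the traces against the weak-$*$ convergence of $\chi_{\mathrm{W}_{\tau_k}}$; your explicit splitting of the product is in fact more carefully justified than the paper's one-line passage (the paper instead takes $u_k\ge 0$ from the outset and argues directly that $\int_{\mathrm{W}_0}u_0\,d\mathcal{H}^{N-1}=0$). One caution about your closing parenthetical: weak-$*$ limits in $L^\infty(\partial\Omega)$ of characteristic functions need not be characteristic functions, so existence of an optimal window with $\mathcal{H}^{N-1}(\mathrm{W}_0)=\alpha$ does not follow from this lower semicontinuity alone---and indeed the paper establishes Theorem~\ref{Thm1} by a separate argument rather than by invoking this result.
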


\begin{proof} Let $(\mathrm{W_k})_{k \in \mathbb{N}} \subset (\mathrm{W_{\tau}})_{\tau>0}$ be a subsequence such that
$$
\displaystyle \mathcal{W} = \liminf_{\tau \to 0+} S_{G, H}(\mathrm{W}_{\tau}) = \lim_{k \to +\infty} S_{G, H}(\mathrm{W}_k).
$$
Now, for each $k \in \mathbb{N}$, we consider $u_k \in \mathrm{X}_{\mathrm{W}_k}$ a non-negative minimizer of $S_{G, H}(\mathrm{W}_k)$. Consequently, $(u_k)_{k \in \mathbb{N}}$ is bounded in $W^{1, G}(\Omega)$.
Now, from reflexivity of the space $W^{1,G}(\Omega)$  and Theorem \eqref{compacidad}, there exists a function $u_0\in W^{1,G}(\Omega)$ such that, up to a subsequence,
\begin{align}\label{debil1}
&u_k\rightharpoonup u_0 \text{ weakly  in }   W^{1,G}(\Omega),\\
&\label{fuerteborde}
u_k\to u_0 \text{ strongly  in }L^{H}(\partial \Omega),\\
&\label{ctp}
u_k\to u_0\text{ a.e. in }\partial \Omega \text{ (See \cite[Pag. 200]{FucJohnKuf} for details)}.
\end{align}
Particularly, $u_0$ is a non-negative profile with $\Phi_{H, \partial \Omega}(u_0) =1$ and using \cite[Theorems 2.1.17, 2.2.8 and Lemma 2.3.16]{DHHR}
$$
\Phi_{G,\Omega} (|\nabla u_0|)+ \Phi_{G,\Omega}(u_0) \leq
\liminf_{k\to\infty} \left[\Phi_{G,\Omega} (|\nabla u_k|)+
\Phi_{G,\Omega}(u_k)\right].
$$
Furthermore, for each $k \in \mathbb{N}$, $u_k = 0 \,\,\,\mathcal{H}^{N-1}-$a.e. on $\mathrm{W}_k$. From the  convergence
$$
\chi_{W_{\tau}} \rightharpoonup \chi_{\mathrm{W}_0} \quad \ast-\text{weakly in}\,\,\,L^{\infty}(\partial \Omega)
$$
and  \eqref{ctp} we conclude that
$$
\displaystyle \int_{\mathrm{W}_0} u_0(x) d\mathcal{H}^{N-1} = \lim_{k \to +\infty} \int_{\mathrm{W}_k} u_k(x) d\mathcal{H}^{N-1} = 0.
$$
Since $u_0$ is a non-negative profile we conclude that $u_0 = 0 \,\,\, \,\,\,\mathcal{H}^{N-1}-$a.e on $\mathrm{W}_0$. Therefore, $u_0$ is an admissible profile in the characterization of $S(\mathrm{W}_0)$ and
\begin{align*}
S_{G, H}(\mathrm{W}_0) & \leq   \Phi_{G,\Omega} (|\nabla u_0|)+ \Phi_{G,\Omega}(u_0)\\
&\leq \displaystyle \liminf_{k\to\infty} \left[\Phi_{G,\Omega} (|\nabla u_k|)+
\Phi_{G,\Omega}(u_k)\right]  \\
&=  \mathcal{W},
\end{align*}
which finishes the proof.
\end{proof}

\begin{remark}
The pair $(u_0, S_{G,H}(\mathrm{W}))$ from Theorem
\ref{ExistThm} is a solution, in the sense of   Definition \ref{soludebil}, of the following boundary value problem:
$$
\left\{
\begin{array}{rclcl}
  -\div\left(\frac{G^{\prime}(|\nabla u_0|)}{|\nabla u_0|}\cdot \nabla u_0\right) + \frac{g(u_0)u_0}{|u_0|} & = & 0 & \text{in} & \Omega \setminus \{u_0 = 0\}\\
  g(|\nabla u_0|) \frac{\nabla u_0}{|\nabla u_0|}\cdot\eta & = &  S_{G,H}(\mathrm{W})
\frac{h(u_0)u_0}{|u_0|} & \text{on} & \partial \Omega\setminus\mathrm{W} \\
  u & = & 0 & \text{on} & \mathrm{W},
\end{array}
\right.
$$
where $G^{\prime}(t)=g(t)$ and $H^{\prime}(t)=h(t)$.

Finally, observe that when $G(t)=H(t)=\frac{t^{p}}{p}$ with $p>1$, the last equation becomes the usual Steklov eigenvalue problem.
\end{remark}

\section{The shape optimization problem: optimal boundary window}\label{OptProb}

This section is devoted to provide the proofs of Theorem \ref{Thm1}
and Theorem \ref{ThmCharac}.

Taking into account the shape optimization problem \eqref{shapeOrlicz} the next result provides a characterization for the constant $\S(\alpha)$.

\begin{lemma}\label{lemma.caract} The following characterization holds true
$$
  \S(\alpha):= \inf\left \{
 \Phi_{G,\Omega}(|\nabla u|) +\Phi_{G,\Omega}(u) \colon
\Phi_{H,\partial\Omega}(u)=1,
\mathcal{H}^{N-1}(\{u=0\})\geq\alpha\right\},
$$
where the infimum is taken for functions in $W^{1,G}(\Omega)\setminus W_0^{1,G}(\Omega)$.
\end{lemma}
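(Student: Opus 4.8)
The plan is to prove the stated identity by establishing the two opposite inequalities between $\S(\alpha)$ and the quantity on the right-hand side, which I denote by $\mathcal{I}(\alpha)$. Throughout, the zero set $\{u=0\}$ is understood as a subset of $\partial\Omega$ measured by $\mathcal{H}^{N-1}$ via the trace of $u$, which is meaningful since $\alpha<\mathcal{H}^{N-1}(\partial\Omega)$. I also note that any competitor $u$ with $\Phi_{H,\partial\Omega}(u)=1$ automatically lies in $W^{1,G}(\Omega)\setminus W_0^{1,G}(\Omega)$, since its trace cannot vanish $\mathcal{H}^{N-1}$-a.e. on $\partial\Omega$; this is what makes the two infima comparable.

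For the inequality $\S(\alpha)\geq\mathcal{I}(\alpha)$, I would fix an arbitrary admissible window $\mathrm{W}\subset\partial\Omega$ with $\mathcal{H}^{N-1}(\mathrm{W})=\alpha$ and an arbitrary $u\in\mathrm{X}_{\mathrm{W}}$ satisfying $\Phi_{H,\partial\Omega}(u)=1$. Since $u=0$ holds $\mathcal{H}^{N-1}$-a.e. on $\mathrm{W}$, we have $\mathrm{W}\subseteq\{u=0\}$ up to an $\mathcal{H}^{N-1}$-null set, so $\mathcal{H}^{N-1}(\{u=0\})\geq\alpha$. Hence $u$ is admissible for $\mathcal{I}(\alpha)$, giving $\Phi_{G,\Omega}(|\nabla u|)+\Phi_{G,\Omega}(u)\geq\mathcal{I}(\alpha)$. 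Taking the infimum first over $u\in\mathrm{X}_{\mathrm{W}}$, i.e. over the definition of $S_{G,H}(\mathrm{W})$, and then over all admissible windows $\mathrm{W}$, yields $\S(\alpha)\geq\mathcal{I}(\alpha)$.

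The reverse inequality $\S(\alpha)\leq\mathcal{I}(\alpha)$ is where the one genuinely delicate point lies. Given any $u$ admissible for $\mathcal{I}(\alpha)$, so that $\Phi_{H,\partial\Omega}(u)=1$ and $\mathcal{H}^{N-1}(\{u=0\})\geq\alpha$, I want to produce a window $\mathrm{W}\subseteq\{u=0\}$ with $\mathcal{H}^{N-1}(\mathrm{W})=\alpha$ \emph{exactly}. The existence of such a subset rests on the fact that $\mathcal{H}^{N-1}$ restricted to the Lipschitz hypersurface $\partial\Omega$ is a finite, non-atomic measure (single points carry no mass, as $N-1\geq 1$); hence, by Sierpi\'nski's theorem on the range of non-atomic measures, every value in $[0,\mathcal{H}^{N-1}(\{u=0\})]$—in particular $\alpha$—is attained as the measure of some measurable subset of $\{u=0\}$. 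Once such $\mathrm{W}$ is fixed, $u$ vanishes $\mathcal{H}^{N-1}$-a.e. on $\mathrm{W}$ and still satisfies $\Phi_{H,\partial\Omega}(u)=1$, so $u\in\mathrm{X}_{\mathrm{W}}$ is an admissible competitor for $S_{G,H}(\mathrm{W})$. Consequently $\S(\alpha)\leq S_{G,H}(\mathrm{W})\leq\Phi_{G,\Omega}(|\nabla u|)+\Phi_{G,\Omega}(u)$, and taking the infimum over all admissible $u$ gives $\S(\alpha)\leq\mathcal{I}(\alpha)$.

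Combining the two inequalities yields the asserted characterization. The main obstacle is precisely the reverse inequality: everything hinges on selecting a subset of $\{u=0\}$ of prescribed measure exactly $\alpha$, for which the non-atomicity of the surface measure on a Lipschitz boundary is the essential ingredient; the remainder is routine bookkeeping of the two nested infima.
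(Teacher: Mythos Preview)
Your proof is correct and follows essentially the same two-inequality strategy as the paper. The only minor differences are cosmetic: the paper invokes the existence of an extremal for $S_{G,H}(\mathrm{W})$ in the first inequality and a minimizing sequence in the second, whereas you work directly with arbitrary admissible competitors and take infima, which is slightly more streamlined; and where you cite Sierpi\'nski's theorem on non-atomic measures to extract a subset of $\{u=0\}$ of exact measure $\alpha$, the paper appeals to the Borel regularity of $\mathcal{H}^{N-1}$ for the same purpose.
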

\begin{proof}
Let us define
$$
\widetilde{\S}(\alpha):= \inf\left \{
\Phi_{G,\Omega}(|\nabla u|) +\Phi_{G,\Omega}(u) \colon
\Phi_{H,\partial\Omega}(u)=1,
\mathcal{H}^{N-1}(\{u=0\})\geq\alpha\right\}.
$$
Under such a definition, we will prove that $\widetilde{\S}(\alpha)$ is equivalent to \eqref{eqMin}.

Firstly, we will prove that $\widetilde{\S}(\alpha)\leq\S(\alpha)$. Let $\mathrm{W}\subseteq\partial\Omega$ be such that $\mathcal{H}^{N-1}(\mathrm{W})\geq\alpha
$ and let $u\in \mathrm{X}_{\mathrm{W}}$ be a nonnegative extremal of $S_{G,H}(\mathrm{W})$. It
is easy to verify that $u$ is an admissible function in
$\widetilde{\S}(\alpha)$, so
$$\widetilde{\S}(\alpha)\leq\Phi_{G,\Omega}(|\nabla u|) +\Phi_{G,\Omega}(u)
=\mathcal{S}_{G,H}(\mathrm{W}).
$$
Hence, it follows that
$\widetilde{\S}(\alpha)\leq\S(\alpha)$.

Next, we  establish the opposite inequality, namely $\S(\alpha)\leq\widetilde{\S}(\alpha)$. For this purpose, we consider $\{v_k\}_{k \in \mathbb{N}}$  to be  a minimizing sequence for $\widetilde{\S}(\alpha)$, i.e.,
$$
  \widetilde{\S}(\alpha)=\lim_{k\to\infty}\Phi_{G,\Omega}(|\nabla v_k|)
+\Phi_{G,\Omega}(v_k) \qquad \text{and} \qquad \mathcal{H}^{N-1}(\{v_k=0\})\geq\alpha.
$$
Now, for any $k \in \mathbb{N}$ we choose $\mathrm{W}_k\subset\{v_k=0\}$ such that
$\mathcal{H}^{N-1}(\mathrm{W}_k)=\alpha$ (it is possible due to regularity of the Hausdorff measure, see \cite[Ch. 1]{LinYang}). Hence, we get
$$
\S(\alpha)\leq S_{G,H}(\mathrm{W}_k)\leq \Phi_{G,\Omega}(|\nabla v_k|)
+\Phi_{G,\Omega}(v_k).
$$
By taking limit when $k\to\infty$, we conclude that $\S(\alpha)\leq\widetilde{\S}(\alpha)$ as desired.
\end{proof}

Finally, we are in   position to supply for the proof of Theorem \ref{Thm1}.

\begin{proof}[{\bf Proof of Theorem \ref{Thm1}}]

Firstly, we will prove   statement \ref{Stat1}. Let $\{v_k\}_{k \in \mathbb{N}}$ be a nonnegative minimizing
sequence of $\S(\alpha)$, i.e,
$$
v_k \geq 0, \,\,\, \Phi_{H,\partial\Omega}(v_k)=1, \,\,\,  \mathcal{H}^{N-1}(\{v_k=0\})\geq\alpha
$$
and
$$
  \lim_{k \to \infty}\Phi_{G,\Omega}(|\nabla v_k|) +\Phi_{G,\Omega}(v_k)=\S(\alpha).
$$
So, $\{v_k\}_{k \in \mathbb{N}}$ is bounded in $W^{1,G}(\Omega)$ then by the reflexivity of the space $W^{1,G}(\Omega)$ there exists $u_0\in W^{1,G}(\Omega)$ such that
$$
v_k\rightharpoonup u_0 \quad \mbox{ weakly in }
W^{1,G}(\Omega).
$$
And from Theorem \eqref{compacidad} we know that
\begin{align*}
\label{weakly}
&v_k\to u_0\quad \mbox{ strongly in } L^{G}(\Omega),\\
& v_k\to u_0\quad \mbox{ strongly in } L^{H}(\partial\Omega),\\
& v_k\to u_0\quad \mathcal{H}^{N-1}\text{ a.e. on }\partial\Omega.
\end{align*}
From these limits we obtain that $\Phi_{H,\partial\Omega}(u_0)=1$ (see, again, Remark \eqref{equivmodulares}) and
\begin{equation} \label{eq.u0}
\mathcal{H}^{N-1}(\{u_0=0\})\geq\limsup_{n\to\infty}\mathcal{H}^{N-1}(\{v_k=0\})\geq\alpha.
\end{equation}
Then, $u_0$ is an admissible function for $\S(\alpha)$ (according to Lemma \ref{lemma.caract}), so
$$
\S(\alpha)\leq\Phi_{G,\Omega}(|\nabla u_0|) +\Phi_{G,\Omega}(u_0).
$$
 The another inequality  easily follows from the weak convergence in $W^{1,G}(\Omega)$ and the weakly lower semi-continuity of the functional $v \mapsto \Phi_{G,\Omega} (|\nabla v|)+\Phi_{G,\Omega}(v)$, i.e.,
$$
\Phi_{G,\Omega}(|\nabla u_0|)
+\Phi_{G,\Omega}( u_0 )\leq\lim_{k\to\infty}\Phi_{G,\Omega}(|\nabla v_k|)
+\Phi_{G,\Omega}(v_k)=\mathcal{S}(\alpha).
$$
Therefore, the function $u_0$ fulfills statement \ref{Stat1}.

Next, we will prove that statement \eqref{Stat1} implies statement \eqref{Stat2}. Indeed, from statement \eqref{Stat1} we know that there exists $u_0\in \mathrm{X}$ such that
$$
   \mathcal{H}^{N-1}(\{u_0=0\})\geq\alpha \quad \text{and} \quad \mathcal{S}(\alpha) = \Phi_{G,\Omega}( |\nabla u_0| )
+\Phi_{G,\Omega}(u_0).
$$
From this and the regularity of the corresponding Hausdorff measure (see \cite[Ch. 1]{LinYang}), there exists a closed set $\mathrm{W}_0\subseteq\{x\in\partial\Omega\colon u_0(x)=0\}$ such that
$$
   \mathcal{H}^{N-1}(\mathrm{W}_0)=\alpha.
$$
Consequently, by using \eqref{eqMin} we obtain that
$$
S_{G,H}(\mathrm{W}_0)\leq\Phi_{G,\Omega}(|\nabla u_0|) +\Phi_{G,\Omega}(u_0)=\S(\alpha) \leq S_{G,H}(\mathrm{W}_0).
$$
Therefore, $\S(\alpha)=S_{G,H}(\mathrm{W}_0)$, thereby finishing the proof.
\end{proof}

In the next result, namely Theorem \ref{ThmCharac}, we find that the set of zeros of the minimizers coincide exactly with the optimal window.

\begin{proof}[{\bf Proof of Theorem \ref{ThmCharac}}]
Let $u_0\in \mathrm{X}$ be an extremal for $\S(\alpha)$, then, from Lemma \ref{lemma.caract} it fulfills that
$$
  \mathcal{H}^{N-1}(\{u_0=0\})\geq \alpha \qquad\text{and}\qquad   \S(\alpha)=\Phi_{G,\Omega}(|\nabla u_0|) +\Phi_{G,\Omega}(u_0).
$$
Suppose for sake of contradiction that
$$
  \mathcal{H}^{N-1}(\{u_0=0\})> \alpha.
$$
Since the Hausdorff measure $\mathcal{H}^{s}$ is Borel regular for $(0\leq s<\infty)$ (cf. \cite[Ch. 1]{LinYang}) there exists a closed set $\mathrm{W}_0\subset \{x\in \partial\Omega \colon u_0(x)=0\}$ such that
$$
\S(\alpha)\leq S_{G,H}(\mathrm{W}_0).
$$
On the other hand, note that $u_0$ is an  admissible function in the characterization of $S_{G,H}(\mathrm{W}_0)$, from where
$$
S_{G,H}(\mathrm{W}_0)\leq \Phi_{G,\Omega}(|\nabla u_0|) +\Phi_{G,\Omega}(u_0).
$$
Consequently, $\S(\alpha)=S_{G,H}(\mathrm{W}_0)$, and so $u_0$ is also a minimizer of $S_{G,H}(\mathrm{W}_0)$. Hence, $u_0$ verifies in the weak sense
\begin{equation}\label{problema}
\left\{
\begin{array}{rclcl}
  -\div\left(\frac{G^{\prime}(|\nabla u_0|)}{|\nabla u_0|}\cdot \nabla u_0\right) + \frac{g( u_0 )u_0}{|u_0|} & = & 0 & \text{in} & \Omega \setminus \{u_0=0\}\\
  g(|\nabla u_0|) \frac{\nabla u_0}{|\nabla u_0|}\cdot \eta & = &  S_{G,H}(\mathrm{W}_0)
\frac{h(u_0)u_0}{|u_0|} & \text{on} & \partial \Omega\setminus\mathrm{W}_0 \\
  u_0 & = & 0 & \text{on} & \mathrm{W}_0.
\end{array}
\right.
\end{equation}
From the available regularity theory for $u_0$ (cf. \cite{Kor} and
\cite[Ch. 5]{Lieberman}) and Hopf type result (cf. \cite[Theorem 1]{MiSh} for subsolutions) we conclude
that
$$
\nabla u_0 \cdot \eta >0\text{ on } \partial (\{ x\in\partial\Omega\colon u_0(x)=0\}\setminus\mathrm{W}_0) \,\,\,(\text{point-wisely}),
$$
which contradicts the second equation in \eqref{problema}, thereby finishing the proof.
\end{proof}
As a consequence from Theorem \ref{ThmCharac} we obtain the
following monotonicity result:

\begin{corollary} The set mapping $\alpha \mapsto \S(\alpha)$ is a strictly increasing function.
\end{corollary}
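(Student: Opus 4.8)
The plan is to fix levels $0<\alpha_1<\alpha_2<\mathcal{H}^{N-1}(\partial\Omega)$ (working, as in Theorem \ref{ThmCharac}, under the $C^{1,\beta}$ regularity of $\partial\Omega$) and to show $\S(\alpha_1)<\S(\alpha_2)$. First I would record the easy direction, namely \emph{weak monotonicity}, via the variational characterization of Lemma \ref{lemma.caract}. Since $\alpha_1<\alpha_2$, any competitor $u$ with $\Phi_{H,\partial\Omega}(u)=1$ and $\mathcal{H}^{N-1}(\{u=0\})\geq\alpha_2$ also satisfies $\mathcal{H}^{N-1}(\{u=0\})\geq\alpha_1$. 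Hence the admissible class defining $\S(\alpha_2)$ sits inside the one defining $\S(\alpha_1)$, and passing to the infimum over the larger class can only lower the value, so $\S(\alpha_1)\leq\S(\alpha_2)$.

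The substantive step is to promote this to a strict inequality, and for that I would argue by contradiction, exploiting the $\alpha$-sharpness furnished by Theorem \ref{ThmCharac}. Assume $\S(\alpha_1)=\S(\alpha_2)$. By the existence result (Theorem \ref{Thm1}) pick an extremal $u_2$ for $\S(\alpha_2)$; as a minimizer of $\S(\alpha_2)$ it satisfies, by Theorem \ref{ThmCharac}, $\mathcal{H}^{N-1}(\{u_2=0\})=\alpha_2$. But then, because $\mathcal{H}^{N-1}(\{u_2=0\})=\alpha_2>\alpha_1$ and $\Phi_{H,\partial\Omega}(u_2)=1$, the same $u_2$ is admissible in the characterization of $\S(\alpha_1)$, and it realizes
$$\Phi_{G,\Omega}(|\nabla u_2|)+\Phi_{G,\Omega}(u_2)=\S(\alpha_2)=\S(\alpha_1).$$
Thus $u_2$ is \emph{also} an extremal for $\S(\alpha_1)$, and a second application of Theorem \ref{ThmCharac} (now at the level $\alpha_1$) forces $\mathcal{H}^{N-1}(\{u_2=0\})=\alpha_1$. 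Comparing the two identities yields $\alpha_1=\alpha_2$, contradicting $\alpha_1<\alpha_2$; hence $\S(\alpha_1)<\S(\alpha_2)$.

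I expect the only delicate point to be the second invocation of Theorem \ref{ThmCharac}: one must be sure that $u_2$ qualifies as a genuine \emph{extremal} of $\S(\alpha_1)$, and not merely an admissible competitor. This is exactly what the standing assumption $\S(\alpha_1)=\S(\alpha_2)$ delivers, through the displayed chain of equalities. Everything else is bookkeeping on the nested admissible classes of Lemma \ref{lemma.caract}, with no additional analytic estimates required; the strictness is driven purely by the rigidity that every extremal of $\S(\alpha)$ must have zero set of sharp measure exactly $\alpha$.
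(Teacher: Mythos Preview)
Your proposal is correct and follows essentially the same argument as the paper: weak monotonicity from the nested admissible classes, then a contradiction argument in which an extremal for $\S(\alpha_2)$ is shown to be an extremal for $\S(\alpha_1)$, and two applications of Theorem~\ref{ThmCharac} force $\alpha_1=\alpha_2$. Your write-up is in fact slightly more explicit than the paper's in justifying why $u_2$ is a genuine extremal at the lower level.
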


\begin{proof}
From definition, it holds that $\alpha \mapsto \S(\alpha)$ is a nondecreasing mapping.
Next, suppose for sake of contradiction that there exists $0 < \alpha < \sigma < \mathcal{H}^{N-1}(\partial \Omega)$ such that $\S(\alpha) =\S(\sigma)$. In particular, this would imply that minimizers for $\S(\sigma)$ are also minimizers for $\S(\alpha)$. However, if $u_0$ is a minimizer for $\S(\sigma)$, then from Theorem \ref{ThmCharac} we get
$$
\mathcal{H}^{N-1}(\{u_0=0\}) = \sigma > \alpha,
$$
which clearly contradicts Theorem \ref{ThmCharac} when applied for $\S(\alpha)$. Therefore, $\S$ is a strictly increasing map.
\end{proof}

\section{Optimal interior holes: Proof of Theorems \ref{Thm4} and \ref{Thm5}}\label{OptProbHole}

In this section we prove existence and some further properties on optimal holes with a prescribed volume.

The next proof runs similarly to  \cite[Theorem 1.2]{BRW2}. We will include the details for the reader convenience.
\begin{proof}[{\bf Proof of Theorem \ref{Thm4}}]
Let $0<\alpha<\Leb(\Omega)$. It is easy to prove that $\mathbb{S}(\alpha)$ is obtained minimizing
$\mathbb{S}_{G,H}(\mathrm{A})$ over all the subsets
$\mathrm{A}\subset \Omega$ such that $\Leb(\mathrm{A})\geq
\alpha$. Moreover, it is clear that
$$
  \inf\left\{\mathbb{S}_{G,H}(\mathrm{A}): \mathrm{A} \subset \Omega, \Leb(\mathrm{A})=\alpha  \right\} \geq \inf\left\{\mathbb{S}_{G,H}(\mathrm{A}): \mathrm{A} \subset \Omega, \Leb(\mathrm{A})\geq \alpha  \right\}.
$$
Observe that test functions for a set of measure greater than or
equal to $\alpha$ are also test functions for a set of measure
$\alpha$, from where the two infimum above coincide.

Now, let us prove that
\begin{equation} \label{S1}
  \mathbb{S}(\alpha)= \inf\left \{
 \Phi_{G,\Omega}(|\nabla u|) +\Phi_{G,\Omega}(u) \colon u\in W^{1,G}(\Omega),
\Phi_{H,\partial\Omega}(u)=1\text{ and }
\Leb(\{u=0\})\geq\alpha\right\}.
\end{equation}
Since the minimizer does not change sign, test functions can be considered to be nonnegative. As in the proof of Theorem \ref{Thm1}, given a nonnegative minimizing sequence $\{v_k\}_{n\in\N}$ of $\mathbb{S}(\alpha)$ there exists $u_0\in W^{1,G}(\Omega)$ such that
\begin{align*}
&v_k\rightharpoonup u_0 \text{ weakly  in }   W^{1,G}(\Omega),\\
&v_k\rightharpoonup u_0 \text{ strongly  in }   L^G(\Omega),\\
&v_k\to u_0 \text{ strongly  in }L^{H}(\partial \Omega),\\
&v_k\to u_0\text{ a.e. in }\partial \Omega.
\end{align*}
Moreover, $ \Phi_{H,\partial\Omega}(u_0)=1$ and $u_0$ can be assumed
to be nonnegative.

Up to a subsequence, given the sets $\mathrm{A}_k=\{v_k=0\}$, there exists a function $0\leq \phi \leq 1$ such that  $\chi_{\mathrm{A}_k}\rightharpoonup\phi$ weakly in the dual space $ L^{\tilde{G}}(\Omega)$. Particularly, for $\mathrm{A}=\{\varphi>0\}$ it holds that
$$
\Leb(\mathrm{A})\geq \int_\Omega \phi(x) \,dx = \lim_{k\to \infty} \int_\Omega \chi_{\mathrm{A}_k}(x)\,dx =\Leb(\mathrm{A}_k)\geq \alpha.
$$
Since $u_0$ and $\phi$ are nonnegative, and
$$
   \int_\Omega u_0(x)\phi(x)\,dx=\lim_{k\to \infty} \int_\Omega v_k(x)\chi_{\mathrm{A}_k}(x)\,dx=0,
$$
there holds that  $u_0=0$ a.e. in $\mathrm{A}$. Then, $u_0$ is an admissible function for $\mathbb{S}(\alpha)$ and
$$
     \mathbb{S}(\alpha)\leq\Phi_{G,\Omega}(|\nabla u_0|) +\Phi_{G,\Omega}( u_0 ).
$$
The weak convergence in $W^{1,G}(\Omega)$ and the weakly lower semi-continuity of the functional $v \mapsto \Phi_{G,\Omega} (|\nabla v|)+\Phi_{G,\Omega}(v)$ gives that
$$
\Phi_{G,\Omega}(|\nabla u_0|)
+\Phi_{G,\Omega}( u_0 )\leq\lim_{k\to\infty}\Phi_{G,\Omega}(|\nabla v_k|)
+\Phi_{G,\Omega}(v_k)=\mathbb{S}(\alpha)
$$
and therefore \eqref{S1} holds.

Finally, it only remains to prove that $\Leb(\{u_0=0\})=\alpha$. Suppose for the  sake of contradiction that $u_0\equiv 0$ in a set $\mathrm{A}$ with $\Leb(\mathrm{A})>\alpha$. By taking a subset we may assume that $\mathrm{A}$ is closed. Now, let $B$ be a small ball such that $\Leb(\mathrm{A}\setminus B)>\alpha$ with $B$ centered in a point in $\partial \mathrm{A} \cap \partial \Omega_1$, where $\Omega_1$ is the connected component of $\Omega\setminus \mathrm{A}$ such that $\partial \Omega\subset \partial \Omega_1$. Notice that, we can pick the ball $B$ such that $\Leb(\mathrm{A}\cap B)>0$. Particularly $\Leb(\{u_0=0\}\cap B)>0$.

Now, since $u_0$ is an extremal for $\mathbb{S}(\alpha)$ and
$\Leb(\mathrm{A}\setminus B)>\alpha$, it is also an
extremal for $\mathbb{S}_{G,H}(A\setminus B)$. Thus,
$$
-\div\left(g(|\nabla u_0|)\frac{\nabla u_0}{|\nabla u_0|}\right) + \frac{g(u_0)u_0}{|u_0|} = 0 \quad \text{ in } \quad \Omega\setminus (\mathrm{A}\setminus B).
$$
As $u_0\geq 0$, in view of Remark \ref{remarkk} (see item (3)), either $u\equiv 0$ or $u_0>0$ in each connected component of $\Omega\setminus (\mathrm{A}\setminus B)=(\Omega \setminus \mathrm{A})\cup B$. Since $u_0\neq 0$ on $\partial \Omega$, in particular, $u_0>0$ in $B$, which clearly contradicts the choice of the ball $B$, and therefore $\Leb(\{u_0=0\})=\alpha$. This concludes the proof.
\end{proof}

Now, we will establish that our optimization problem prevents the
existence of maximal interior holes.

\begin{proof}[{\bf Proof of Theorem \ref{Thm5}}]
For $0\leq \varepsilon \ll 1$ fixed, let $\delta = \delta(\varepsilon)$ be such that
$$
  \mathrm{A}_{\varepsilon, \delta} = \{x \in \Omega: \varepsilon \leq \dist(x, \partial \Omega)\leq \delta\}
$$
fulfills $\Leb(\mathrm{A}_{\varepsilon, \delta}) = \alpha$. We affirm that
$$
  \mathbb{S}_{G, H}(\mathrm{A}_{\varepsilon, \delta}) \to +\infty \quad \text{as} \quad \varepsilon \to 0^+.
$$
Indeed, let $v_{\varepsilon} \in W^{1,G}(\Omega)\setminus W_0^{1,G}(\Omega)$ be a minimizer for $\mathbb{S}_{G, H}(\mathrm{A}_{\varepsilon, \delta})$  according to definition \eqref{EqMinHole} and remember that we   assumed the normalization   $\Phi_{H, \partial \Omega}(v_{\varepsilon}) = 1$.

Now, for each $\sigma>0$ fixed, we consider
$$
\Omega_{\sigma} = \{x \in \Omega: \dist(x, \partial \Omega)> \sigma\}.
$$
Notice that $v_{\varepsilon}$ is a weak solution to
$$
\left\{
\begin{array}{rclcl}
  -\div\left( g(|\nabla v_{\varepsilon}|)\cdot\frac{\nabla v_{\varepsilon}}{|\nabla v_{\varepsilon}|}\right) + \frac{g( v_{\varepsilon} )}{ |v_{\varepsilon}|}v_{\varepsilon}& = & 0 & \text{in} & \Omega_{\delta}\setminus \{v_{\varepsilon} = 0\}\\
  v_{\varepsilon} & = & 0 & \text{on} & \mathrm{A}_{\varepsilon, \delta}.
\end{array}
\right.
$$
In particular, $v_{\varepsilon} = 0$ on $\partial \Omega_{\delta}$. Hence, from Comparison Principle (cf. \cite{Mon}) $v_{\varepsilon} = 0$ in $\Omega_{\delta}$. Furthermore, remember that by construction $v_{\varepsilon} = 0$ in $\mathrm{A}_{\varepsilon, \delta}$, therefore
\begin{equation}\label{EqThm3}
v_{\varepsilon} \to 0 \quad \text{a.e.} \quad \Omega \quad \text{as} \quad \varepsilon \to 0.
\end{equation}
Finally, supposing for the sake of contradiction that $\mathbb{S}_{G, H}(\mathrm{A}_{\varepsilon, \delta})$ is  bounded. Then, up to a subsequence, there would exist a $u_0 \in W^{1,G}(\Omega)$ such that
\begin{eqnarray}\label{weakly}
  v_{\varepsilon}\rightharpoonup u_0 & \mbox{ weakly in } & W^{1,G}(\Omega),\\\label{weakly1}
  v_{\varepsilon}\to u_0 & \mbox{ strongly in } &  L^{G}(\Omega),\\\label{weakly2}
  v_{\varepsilon} \to u_0 &  \mbox{ strongly in } &  L^{H}(\partial\Omega),\\\label{weakly3}
  v_{\varepsilon}\to u_0 &  \mathcal{H}^{N-1}\text{ a.e. on } & \partial\Omega.\label{weakly4}
\end{eqnarray}
Taking into account the sentences \eqref{EqThm3}, \eqref{weakly4}, and the normalization condition $\Phi_{H, \partial \Omega}(v_{\varepsilon}) = 1$ we obtain a contradiction.
\end{proof}

\section{Proof of Theorems \ref{Thm1.6} and \ref{Thm1.7}}\label{Sec6}

This section will deal with general shapes that may have zero Lebesgue measure. First, we  analyze when the ``Sobolev trace constant'' perceives the set $\mathrm{A}\subset \R^n$ with zero Lebesgue measure. In this direction, we prove a continuity result of $\mathbb{S}_{\mathrm{A}}$ in relation to $\mathrm{A}$ in Hausdorff distance.

In order to study when $\mathbb{S}_{\mathrm{A}}$ recovers the usual Orlicz-Sobolev trace
constant, i.e., when $\mathbb{S}_{\mathrm{A}}=\mathbb{S}_{\emptyset}$, we recall the
notion of $G-$capacity for Young functions, which plays a fundamental role in Nonlinear Potential Theory.

\begin{definition}[{\bf $G-$Capacity, \cite[Definition 2.2]{Bieg}}]
Given a Young function $G$ satisfying the $\Delta_2-$condition, we define the $G-$capacity of $\mathrm{A}\subset \R^N$ by
{\small{
$$
   \text{Cap}_G(\mathrm{A})=\inf \left\{\int_{\R^N} G( \varphi )+G(|\nabla \varphi|)\,dx : \varphi\in W^{1,G}(\R^N)\cap C^\infty(\R^N) \quad \text{and} \quad \varphi|_{\mathrm{A}}\geq 1\right\}.
$$}}
Moreover, if such a function does not exists, then $\text{Cap}_G(\mathrm{A})=\infty$.
\end{definition}

Next, we will deliver the proof of Theorem \ref{Thm1.6}.

\begin{proof}[{\bf Proof of Theorem \ref{Thm1.6}}]
We prove both implications of the statement.
\begin{enumerate}
  \item Let us see that $\mathbb{S}_{\mathrm{A}} = \mathbb{S}_\emptyset$ implies that  $\text{Cap}_G(\mathrm{A})=0$.

Given an extremal $u_0$ for $\mathbb{S}_{\mathrm{A}}$, it is also an
extremal for $\mathbb{S}_\emptyset$, and therefore it is a weak
solution to
$$
  \left\{
\begin{array}{rclcl}
  -\div\left(\frac{g(|\nabla u_0|)}{|\nabla u_0|}.\nabla u_0\right) + \frac{g( u_0 )u_0}{|u_0|} & = & 0 & \text{in} & \Omega \\
  g(|\nabla u_0|)\frac{\nabla u_0 }{|\nabla u_0|}\cdot\eta & = &  \mathbb{S}_{\mathrm{A}}
\frac{h( u_0  )u_0 }{|u_0 |} & \text{on} & \partial \Omega.
\end{array}
\right.
$$
From Remark \ref{remarkk} items (2, 4), we have that $u_0\in C^{1,\gamma}(\bar \Omega)$ and $u_0>0$ in $\bar\Omega$.

Since $u_0\in W_{\mathrm{A}}^{1,G}(\Omega) = {\overline{C^\infty_0(\bar\Omega\setminus \mathrm{A})}}^{\|.\|_{W^{1,G}}}$, there exists a sequence $\{u_k\}_{k\in\N} \subset C^\infty_0(\bar\Omega\setminus A)$ such that $u_k\to u_0$ in $W^{1,G}(\Omega)$.

Now, observe that the function $\varphi_k=1-\frac{u_k}{u_0}$ is identically equal to $1$ in a neighborhood of $\mathrm{A}$. Moreover, since $\beta=\inf\{u_0(x): x\in \bar \Omega\}>0$, we get
$$
  \|\varphi_k\|_{L^G(\Omega)}=\left\|\frac{u_0-u_k}{u_0}\right\|_{L^G(\Omega)} \leq \frac{1}{\beta} \|u_0-u_k\|_{L^G(\Omega)}
$$
and
\begin{align*}
\|\nabla \varphi_k\|_{L^G(\Omega)}&=\left\| \frac{1}{u_0}\nabla u_k - \frac{u_k}{u_0^2}\nabla u_0 \right\|_{L^G(\Omega)}\\
&=\left\| \frac{1}{u_0}\nabla (u_k-u_0) + \frac{1}{u_0^2}\nabla u_0 (u_0-u_k)\right\|_{L^G(\Omega)}\\
&\leq \frac{1}{\beta} \|\nabla (u_k-u_0)\|_{L^G(\Omega)}+ \frac{1}{\beta^2}\|\nabla u_0\|_{L^\infty(\Omega)}\|u_0-u_k\|_{L^G(\Omega)}.
\end{align*}
Therefore, for every $\ve>0$ there exists $k_0\geq 1$ such that
$$
   \| \varphi_k\|_{W^{1,G}(\Omega)}<\ve
$$
if $k\geq k_0$. Finally, the proof finishes by using \cite[Theorem 8.5.7]{FucJohnKuf} after extending $\varphi_k$ to $\R^N$ and regularizing.

  \item Let us see that $\text{Cap}_G(\mathrm{A})=0$ implies $\mathbb{S}_{\mathrm{A}} = \mathbb{S}_{\emptyset}$.

Let us prove that if $\text{Cap}_G(\mathrm{A})=0$ then
$$
  W^{1,G}_A(\Omega)=W^{1,G}_\emptyset(\Omega)=W^{1,G}(\Omega),
$$
from where it will follows the lemma. Let $\mathrm{A} \subset
\Omega$ be such that $\text{Cap}_G(\mathrm{A})=0$. Then, given
$\ve>0$ there exists $\varphi_\ve \in W^{1,G}(\R^N)\cap
C^\infty(\R^N)$ such that
$$
   \int_{\R^N} G(|\varphi_\ve|)+G(|\nabla \varphi_\ve|)\,dx<\ve
$$
and
$\varphi_\ve\equiv 1$ in a neighborhood of $\mathrm{A}$.

Take $u\in C^\infty(\bar\Omega)$ and define $u_\ve=(1-\varphi_\ve)u$. Observe that $u_\ve \in W^{1,G}(\Omega)$ and $u_\varepsilon =0$ in $\mathrm{A}$. Hence,
$$
  \|u-u_\ve\|_{W^{1,G}(\Omega)}=\|\varphi_\ve u\|_{W^{1,G}(\Omega)}.
$$
Since $W_{\mathrm{A}}^{1,G}(\Omega) = {\overline{C^\infty_0(\bar\Omega\setminus \mathrm{A})}}^{\|.\|_{W^{1,G}}}$ the result follows if we show that $\|\varphi_\ve u\|_{W^{1,G}(\Omega)}$ vanishes as $\ve\to0$.

Now, since the $\Delta_2$ condition is in force, from  \cite[3.10.4]{FucJohnKuf} the convergence in norm is equivalent to the convergence of modulars. Consequently, we have to show that
$$
   \int_{\R^N} G(|\nabla(\varphi_\ve u)|) + G(|\varphi_\ve u|)\,dx \to 0 \quad \text{as } \ve\to 0.
$$
For that end, we have to bound each term of the expression
$$
   \int_{\R^N} G(|\nabla(\varphi_\ve u)|) + G( |\varphi_\ve u |)\,dx\leq C \int_{\R^N} G(|u\nabla \varphi_\ve|)+ G(|\varphi_\ve \nabla u|) + G( |\varphi_\ve u| )\,dx.
$$
where we have used the $\Delta_2$ condition. First, since $G$ is
increasing and the  Sobolev extension theorem holds, using again the
$\Delta_2$ condition,
{\small{
$$
   \int_{\R^N} G(|u\nabla \varphi_\ve|)\,dx \leq  \int_{\R^N} G(|\|u\|_\infty \nabla \varphi_\ve|)\,dx \leq C_\gamma\|u\|_\infty^\gamma \int_{\R^N} G(|\nabla \varphi_\ve|)\,dx \leq C_\gamma \|u\|_\infty^\gamma \ve
$$}}
where $\gamma$ is a constant depending only  on $g^\pm$.

The second and third terms are bounded in the same way, and the proof concludes.
\end{enumerate}
\end{proof}

Before proving Theorem \ref{Thm1.7}, let us recall the definition of Hausdorff distance.

\begin{definition} Let $X, Y \subset \R^N$ be two non-empty subsets. The Hausdorff distance between $X$ and $Y$ is given by
$$
 \displaystyle \text{dist}_{\mathcal{H}}(X, Y):= \inf\left\{\varepsilon>0:\,\,  X \subset Y_{\varepsilon} \quad \text{and} \quad Y \subset X_{\varepsilon}\right\},
$$
where $\displaystyle Z_{\varepsilon} = \bigcup_{z_0 \in Z} \left\{z \in \R^N /  \dist(z, z_0) \leq \varepsilon\right\}$ is the usual fattening of $Z$.
\end{definition}

We are now in position to supply for the proof of Theorem \ref{Thm1.7}

\begin{proof}[{\bf Proof of Theorem \ref{Thm1.7}}]
Let $\displaystyle \mathrm{A}_\varepsilon=\bigcup_{x\in\mathrm{A}} B_\varepsilon(x)$. Assume that
$\dist_{\mathcal{H}}(\mathrm{\mathrm{A}_k}, \mathrm{A})\to 0$ as $k\to\infty$, then given $\varepsilon>
0$ there exists $k_0 \in \mathbb{N}$ such that $\mathrm{A},\mathrm{A_k}\subseteq
A_\varepsilon$ if  $k\geq  k_0$ and it follows that
$$
W^{1,G}_
{\mathrm{A}_\varepsilon}(\Omega)\subseteq W^{1,G}_{\mathrm{A}}(\Omega)\cap W^{1,G}_
{\mathrm{A_k}}(\Omega).
$$
First, note that
$$
\mathbb{S}_{\emptyset}\leq\mathbb{S}_{\mathrm{A}},\, \mathbb{S}_{\mathrm{A_k}}\leq
\mathbb{S}_{\mathrm{A_\varepsilon}} \quad \mbox{ if } k\geq k_0.
$$
Now, let $u\in W^{1,G}_{\mathrm{A}}(\Omega)$ be an extremal for
$\mathbb{S}_{\mathrm{A}}$ normalized such that $\Phi_{H, \partial
\Omega}(u) = 1$. As  $u\in W_{\mathrm{A}}^{1,G}(\Omega) = {\overline{C^\infty_0(\bar\Omega\setminus \mathrm{A})}}^{\|.\|_{W^{1,G}}}$, given $\delta> 0$ there exists $u_\delta\in
C^\infty_0(\overline{\Omega}\setminus A)$ such that
$$
  \|u-u_\delta\|_{W^{1,G}(\Omega)}\leq\delta.
$$
Moreover, we may suppose that
$$
  supp(u_\delta)\subseteq\overline{\Omega}\setminus \mathrm{A}_\varepsilon
$$
if $\varepsilon$ is small enough. Now $u_\delta\in
W^{1,G}_{\mathrm{A_k}}(\Omega)$ for $k\geq k_0$, so using Lemma \ref{Lemma2.3}
\begin{align*}
\mathbb{S}_{\mathrm{A_k}}&\leq
\Phi_{G,\Omega}(u_\delta)+\Phi_{G,\Omega}(|\nabla u_\delta|)\\
&\leq (1+\eta)^{g^+}\left[\Phi_{G,\Omega}(u)+ \Phi_{G,\Omega}(|\nabla u|)\right]+ C_\eta\left[\Phi_{G,\Omega}(u-u_\delta)+\Phi_{G,\Omega}( |\nabla (u-u_\delta)|)\right]\\
&\leq (1+\eta)^{g^+}\mathbb{S}_{\mathrm{A}}+C_\eta\|u-u_\delta\|_{W^{1,G}(\Omega)}^{g^-}\\
&\leq (1+\eta)^{g^+}\mathbb{S}_{\mathrm{A}}+C_\eta\delta^{g^-}
\end{align*}
where $\eta\geq 0$ is an arbitrary parameter and we have used that
$$
\Phi_{G,\Omega}\left(\frac{u-u_\delta}{\|u-u_\delta\|_{G}}\right) \leq 1 \implies \Phi_{G,\Omega}\left(u-u_\delta\right) \leq \|u-u_\delta\|_{L^G(\Omega)}^{g^-} \leq \|u-u_\delta\|_{W^{1, G}(\Omega)}^{g^-}.
$$
Then,  taking first limit as $\delta\to0$, we obtain
$$
\mathbb{S}_{\mathrm{A_k}}\leq (1+\eta)^{g^+}\mathbb{S}_{\mathrm{A}},
$$
and then,  $\eta\to0$, we get
$$
\mathbb{S}_{\mathrm{A_k}}\leq \mathbb{S}_{\mathrm{A}}.
$$
Analogously, it can be proved that the reverse inequality
$
  \mathbb{S}_{\mathrm{A}}\leq \mathbb{S}_{\mathrm{A_k}},
$
and then
$$
 |\mathbb{S}_{\mathrm{A_k}}-\mathbb{S}_{\mathrm{A}}|\to 0 \quad \text{as } k \to \infty.
$$
Let $u_k$ be an extremal for $\mathbb{S}_{\mathrm{A_k}}$ normalized
such that $\Phi_{G,H}(u_k)=1$. Then,
$$
\Phi_{G,\Omega}(u_k)+\Phi_{G,\Omega}(|\nabla
u_k|)=\mathbb{S}_{\mathrm{A_k}}\to\mathbb{S}_{\mathrm{A}} \quad \text{as }k\to\infty.
$$
It follows that $u_k\subseteq W^{1,G}(\Omega)$ is bounded. Hence,  there
exists a subsequence (still denoted by $u_k$) and a function
$u\in W^{1,G}(\Omega)$ such that
\begin{align*}
&u_k\rightharpoonup u \mbox{ weakly in } W^{1,G}(\Omega),\\
&u_k\rightharpoonup u \mbox{ strongly in } L^{H}(\partial\Omega).
\end{align*}
By definition of the spaces $W^{1,G}_{\mathrm{A_k}}(\Omega)$ and
$W^{1,G}_{\mathrm{A}}(\Omega)$  it is easy to see that $u\in
W^{1,G}_{\mathrm{A}}(\Omega)$. Moreover, $\Phi_{G,H}(u)=1$ and  also
$$
\begin{array}{rcl}
  \mathbb{S}_{\mathrm{A}}\leq
\Phi_{G,\Omega}(u)+\Phi_{G,\Omega}(|\nabla u|)& \leq & \displaystyle \lim_{k \to \infty}
\Phi_{G,\Omega}(u_k)+\Phi_{G,\Omega}(|\nabla
u_k|)\\
 & = & \displaystyle \lim_{k \to \infty} \Phi_{G,\Omega}(u_k)+\Phi_{G,\Omega}(|\nabla u_k|) \\
   & \leq & \displaystyle \lim_{k \to \infty}\mathbb{S}_{\mathrm{A_k}}=\mathbb{S}_{\mathrm{A}}.
\end{array}
$$
The proof is concluded.
\end{proof}

\begin{example} Observe that when $H=G$ in our results, we can recover, to some extent the ``Steklov eigenvalue problem''. However, in contrast with the power case, for general Young functions it is not immediate that the hypothesis in Theorem \ref{compacidad} is fulfilled.

Let us see that, in fact, that such condition is satisfied, i.e, $G<<(G^*)^\frac{N-1}{N}$. Indeed, it is equivalent to $G^\frac{N}{N-1}<< G^*$. Moreover, since $G$ is increasing it also is equals to prove the following limit
\begin{equation}\label{equiv}
\lim_{t\to+\infty} \frac{(G^*)^{-1}(t)}{(G^\frac{N}{N-1})^{-1}(t)}=0
\end{equation}
to hold (see \cite[p.291]{AdamsFour}).

First observe that integrating by part we obtain that
$$
 (G^*)^{-1}(t)= \int_0^{t} \frac{G^{-1}(s)}{s^{1+\frac{1}{N}}}ds
 =N\int_0^{t} \frac{(G^{-1}(s))'}{s^{\frac{1}{N}}}ds -\frac{N }{t^\frac{1}{N}} G^{-1}(t).
$$
We divide the previous expression by $(G^\frac{N}{N-1})^{-1}(t)$, and analyze each one of the its two terms. Notice that $(G^\frac{N}{N-1})^{-1}(t) = G^{-1}(t^\frac{N-1}{N})$.

From the L'Hospital's rule, the first term behaves as follows
 \begin{align*}
 I_1:=\lim_{t\to\infty} \frac{N}{G^{-1}(t^\frac{N-1}{N})} \int_0^{t} \frac{(G^{-1}(s))'}{s^{\frac{1}{N}}}ds &=
 \lim_{t\to\infty} \frac{(G^{-1}(t))'}{t^{\frac{1}{N}}} \frac{N}{(G^{-1}(t^\frac{N-1}{N}))'}\\
 &=\lim_{t\to\infty} \frac{N}{t^\frac1N} \frac{G'(G^{-1}(t^\frac{N-1}{N}))}{G'(G^{-1}(t))}.
 \end{align*}
It is not hard to prove that $G^{-1}$ satisfies a condition analogous to \eqref{Liebcond} with constants $(g^+)^{-1}$ and $(g^-)^{-1}$, see \cite[Lemma 2.2]{MW}. Then
$$
 \frac{N}{t^\frac1N}\frac{G'(G^{-1}(t^\frac{N-1}{N}))}{G^{\prime}(G^{-1}(t))}  \leq
 \frac{g^+}{g^-}  \frac{N}{t^\frac1N} \frac{G(G^{-1}(t^\frac{N-1}{N}))}{G(G^{-1}(t))} = \frac{g^+}{g^-} \frac{N}{t^\frac{2}{N}} \frac{G^{-1}(t)}{G^{-1}(t^{1-\frac1N})}
$$
but again, using the $\Delta_2$ condition for $G^{-1}$, see \cite[Lemma 2.2]{MW}, the above inequality can be bounded as follows
$$
\frac{g^+}{g^-} \frac{N}{t^\frac{2}{N}} \frac{G^{-1}(t)}{G^{-1}(t^{1-\frac1N})} = \frac{g^+}{g^-} \frac{N}{t^\frac{2}{N}} \frac{G^{-1}(t^{1-\frac1N} t^\frac1N)}{G^{-1}(t^{1-\frac1N})} \leq \frac{g^+}{g^-} \frac{N t^\frac{1}{Ng^- } }{t^\frac{2}{N}} =  \frac{g^+}{g^-} N t^{\frac{1}{N}(\frac{1}{g^-}-1) }
$$
for $t>1$. Consequently, since $g^->1$, the last three expressions lead to $I_1=0$.

Let us deal with the second  term. Using again the $\Delta_2$ condition for $G^{-1}$ we get
$$
I_2:=\lim_{t\to\infty} \frac{N}{t^\frac1N} \frac{G^{-1}(t)}{G^{-1}(t^{1-\frac{1}{N}})}    =\lim_{t\to\infty} \frac{N}{t^\frac1N} \frac{G^{-1}(t^{1-\frac1N}t^\frac1N)}{G^{-1}(t^{1-\frac1N})} \leq  \lim_{t\to\infty} N   t^{\frac{1}{N}(\frac{1}{g^-}-1)}
$$
and it vanishes since $g^->1$. Since $I_1+I_2=0$, \eqref{equiv} holds as required.
\end{example}

\section{Spherical symmetrization}\label{SpheSym}

In this short section we will characterize the optimal window $\mathrm{W}_0 \subset \partial \Omega$ in our optimization problem as a spherical cap provided that $\Omega = B_1$. For that purpose, an essential tool is played by the \textit{spherical symmetrization}.

Given a mensurable set $\mathcal{E}\subset\R^N$, the spherical symmetrization $\mathcal{E}^{\sharp}$ of $\mathcal{E}$ with respect to an axis given by a unit vector $e_k$ reads as follows: for each positive number $r$, take the intersection $\mathcal{E}\cap \partial B(0,r)$ and replace it by the spherical cap of the same $\mathcal{H}^{N-1}-$measure and center $re_k$. Hence, $\mathcal{E}^{\sharp}$ is the union of these caps.

Now, the spherical symmetrization $u^{\sharp}$ of a measurable function $u: \Omega \to \R_{+}$ is constructed by symmetrizing the super-level sets so that, for all $t$
$$
   \{u^{\sharp}\geq t\} = \{u\geq t\}^{\sharp}.
$$
We recommend to the reader references \cite{Kawohl} and \cite{Sperner} for more details.

In the following, we recall some useful tools from Measure Theory.

\begin{definition} Let $(\mathbb{X}, \mathfrak{M}, \mu)$ be a measure space. Given a measurable function $f: \mathbb{X} \to \R$, the \textit{distribution function} of $f$ is the function $\varrho_f: [0, +\infty) \to [0,  \mu(\mathbb{X})]$ defined as follows:
$$
   \varrho_f(t) := \mu(\{x \in \mathbb{X}: |f(x)| > t\}).
$$
\end{definition}

The next result holds as consequence from Fubini-Tonelli's theorem.

\begin{lemma}\label{AuxLemma}   Let $(\mathbb{X}, \mathfrak{M}, \mu)$ be a finite measure space, let $G \in C^1(\R)$ a convex function and let $f: \mathbb{X} \to \R$ be a measurable function. Then,
$$
   \displaystyle \int_{\mathbb{X}} G(|f(x)|)d\mu(x) = \int_{0}^{+\infty} G^{\prime}(t)\varrho_f(t)dt.
$$
\end{lemma}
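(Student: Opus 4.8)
The plan is to prove the layer-cake (distribution function) identity
$$
   \int_{\mathbb{X}} G(|f(x)|)\,d\mu(x) = \int_{0}^{+\infty} G^{\prime}(t)\varrho_f(t)\,dt
$$
by writing $G(|f(x)|)$ as an integral of $G'$ and then exchanging the order of integration via Fubini--Tonelli. First I would observe that since $G \in C^1(\R)$ and (being a Young function) $G(0)=0$, the fundamental theorem of calculus gives the pointwise representation
$$
   G(|f(x)|) = \int_0^{|f(x)|} G^{\prime}(t)\,dt = \int_0^{+\infty} G^{\prime}(t)\,\chi_{\{t < |f(x)|\}}\,dt
$$
for every $x \in \mathbb{X}$, where $\chi$ denotes the indicator function. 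The key move is to recognize that the indicator $\chi_{\{t<|f(x)|\}}$, viewed as a function of the pair $(x,t)$, links the integrand to the super-level sets defining $\varrho_f$.

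Next I would integrate this identity over $\mathbb{X}$ with respect to $\mu$ and apply the Fubini--Tonelli theorem to swap the $x$- and $t$-integrals:
$$
   \int_{\mathbb{X}} G(|f(x)|)\,d\mu(x) = \int_{\mathbb{X}}\int_0^{+\infty} G^{\prime}(t)\,\chi_{\{t<|f(x)|\}}\,dt\,d\mu(x) = \int_0^{+\infty} G^{\prime}(t)\left(\int_{\mathbb{X}} \chi_{\{|f(x)|>t\}}\,d\mu(x)\right)dt.
$$
The inner integral is precisely $\mu(\{x \in \mathbb{X}: |f(x)|>t\}) = \varrho_f(t)$ by the definition of the distribution function, which yields the claimed formula immediately.

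The only genuine obstacle is verifying the hypotheses that license the interchange of integrals. The integrand $(x,t)\mapsto G^{\prime}(t)\,\chi_{\{t<|f(x)|\}}$ is nonnegative because $G$ is convex with $G(0)=0$, so $G^{\prime}\geq 0$ on $[0,\infty)$; hence Tonelli's theorem applies with no integrability precondition, the exchange being valid for nonnegative measurable functions on a $\sigma$-finite (indeed finite) product space. I would confirm joint measurability of the integrand: $G^{\prime}$ is continuous hence Borel, $|f|$ is measurable, so the set $\{(x,t): t<|f(x)|\}$ is measurable in the product $\sigma$-algebra, making the product integrand measurable. Since $\mu$ is a finite measure and Lebesgue measure is $\sigma$-finite, Tonelli guarantees both iterated integrals equal the double integral, completing the argument. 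I would remark that finiteness of $\mu(\mathbb{X})$ ensures $\varrho_f$ is well defined with values in $[0,\mu(\mathbb{X})]$, so the right-hand side is an honest (possibly infinite) integral of a nonnegative function and the identity holds as an equality in $[0,+\infty]$.
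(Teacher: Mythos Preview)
Your proof is correct and follows exactly the approach the paper indicates: the paper states only that the result ``holds as consequence from Fubini--Tonelli's theorem'' without further detail, and your layer-cake argument via $G(|f(x)|)=\int_0^{|f(x)|}G'(t)\,dt$ followed by Tonelli is precisely that consequence spelled out. One minor remark: the nonnegativity of $G'$ on $[0,\infty)$ that you invoke does not follow from convexity and $G(0)=0$ alone, but it does hold for the Young functions to which the lemma is actually applied in the paper, so the argument is sound in context.
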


\begin{remark}\label{RemDistFunc} Another important piece of information for our approach is the following result from \cite[Proposition 5, item b]{Sperner}: if $u \in L^1(\mathbb{X})$ then on the Borel sets
$$
 \varrho_{u}(t) = \varrho_{u^{\sharp}}(t) \quad \forall \,\,\, t\geq 0.
$$
\end{remark}

Next, for the characterization of the optimal window we need the following result.

\begin{proposition}\label{reluu*} Let $u \in W^{1, G}(B_1)\cap W^{1, H}(\partial B_1)$ and $u^{\sharp}$ be its spherical symmetrization. Assume that
$$
  P_{B_1}(\{x \in B_1: u(x)>t\})\footnote{Here $P_{\Omega}$ means the perimeter, in the sense of De Giorgi, relative to $\Omega$. For such a concept in the general case, see \cite{LinYang}.} \geq \gamma \varrho_{u}^{\frac{N-1}{N}}(t),
$$
for some positive constant $\gamma \geq N \sqrt[N]{\omega_N}$ and any $t  \geq 0$. Then, $u^{\sharp} \in W^{1, G}(B_1)\cap W^{1, H}(\partial B_1)$. Moreover,
\begin{enumerate}
  \item\label{Stet1} $\displaystyle \int_{B_1}G(|u^{\sharp}|)\,dx=\int_{B_1}G(|u|)\, dx,$
  \item\label{Stet2} $\displaystyle \int_{\partial B_1} H(|u^{\sharp}|)\, d\mathcal{H}^{N-1}=\int_{\partial B_1}H(|u|) d\mathcal{H}^{N-1},$
  \item\label{Stet3} $\displaystyle \int_{B_1} G(|\nabla u^{\sharp}|)\, dx \leq \int_{B_1}G(|\nabla u|)\, dx$.
\end{enumerate}
\end{proposition}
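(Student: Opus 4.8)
The plan is to dispatch the two equimeasurability identities (1) and (2) directly from the layer-cake formula, and to reserve the substantive work for the P\'olya--Szeg\H{o} inequality (3).

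For (1), I would apply Lemma~\ref{AuxLemma} on $(\mathbb{X},\mu)=(B_1,\Leb)$ to write
$$\int_{B_1}G(|u^{\sharp}|)\,dx=\int_0^{+\infty}G'(t)\,\varrho_{u^{\sharp}}(t)\,dt \qquad\text{and}\qquad \int_{B_1}G(|u|)\,dx=\int_0^{+\infty}G'(t)\,\varrho_{u}(t)\,dt,$$
and then invoke Remark~\ref{RemDistFunc}, which gives $\varrho_{u^{\sharp}}\equiv\varrho_{u}$, so the two right-hand sides coincide. Statement (2) is obtained by the same computation on $(\partial B_1,\mathcal{H}^{N-1})$ with $H$ replacing $G$, once one observes that the trace of $u^{\sharp}$ on the sphere $\partial B_1=\partial B(0,1)$ is precisely the spherical-cap rearrangement of the trace of $u$, so that the two boundary distribution functions agree as well.

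For (3) the approach is the coarea formula together with Jensen's inequality and a perimeter comparison. After reducing to a Lipschitz $u$ by density in $W^{1,G}(B_1)$, set $\mu(t)=\varrho_u(t)$ and $P(t)=P_{B_1}(\{u>t\})$. For a.e. regular value $t$ the coarea formula yields $-\mu'(t)=\int_{\{u=t\}}|\nabla u|^{-1}\,d\mathcal{H}^{N-1}$ and $\int_{\{u=t\}}\,d\mathcal{H}^{N-1}=P(t)$, so Jensen's inequality applied to the convex $G$ against the probability measure $(-\mu'(t))^{-1}|\nabla u|^{-1}\,d\mathcal{H}^{N-1}$ gives
$$\int_{\{u=t\}}\frac{G(|\nabla u|)}{|\nabla u|}\,d\mathcal{H}^{N-1}\;\geq\;(-\mu'(t))\,G\!\left(\frac{P(t)}{-\mu'(t)}\right).$$
The same identities hold for $u^{\sharp}$, and since $\varrho_{u^{\sharp}}=\varrho_u$ the factor $-\mu'(t)$ is unchanged; writing $P^{\sharp}(t)=P_{B_1}(\{u^{\sharp}>t\})$ and using that $|\nabla u^{\sharp}|$ is (for the symmetrized profile) constant on its level sets, the corresponding quantity for $u^{\sharp}$ equals $(-\mu'(t))\,G(P^{\sharp}(t)/(-\mu'(t)))$. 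Since $a\mapsto (-\mu')\,G(a/(-\mu'))$ is nondecreasing, the perimeter comparison $P^{\sharp}(t)\leq P(t)$ then yields the pointwise-in-$t$ inequality, and integrating over $t$ via coarea returns $\int_{B_1}G(|\nabla u^{\sharp}|)\,dx\leq\int_{B_1}G(|\nabla u|)\,dx$.

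The main obstacle is confined to this last step. First, in contrast with Schwarz symmetrization, the gradient of a spherically symmetrized function is \emph{not} literally constant on its level sets, as it splits into radial and tangential parts; the clean equality case used above must therefore be replaced by Sperner's sharp estimate for spherical rearrangements, for which I would cite \cite{Sperner} (see also \cite{Kawohl}). Second, the perimeter comparison $P^{\sharp}(t)\leq P(t)$ is exactly where the hypothesis is consumed: the symmetrized cap region satisfies $P_{B_1}(\{u^{\sharp}>t\})\leq \gamma\,\varrho_{u^{\sharp}}^{\frac{N-1}{N}}(t)$ with $\gamma=N\sqrt[N]{\omega_N}$, while the assumed bound $P_{B_1}(\{u>t\})\geq \gamma\,\varrho_{u}^{\frac{N-1}{N}}(t)$ combined with $\varrho_u=\varrho_{u^{\sharp}}$ forces $P^{\sharp}(t)\leq P(t)$; for Schwarz symmetrization this is the classical isoperimetric inequality and needs no hypothesis, whereas in the spherical setting it may fail and is thus imposed. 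Finally, the membership $u^{\sharp}\in W^{1,G}(B_1)\cap W^{1,H}(\partial B_1)$ follows from (1)--(3) and the analogous cap-rearrangement estimate for the tangential gradient on $\partial B_1$, the $\Delta_2$-condition for $G$ and $H$ upgrading finiteness of the modulars to membership in the Orlicz spaces, and removing the Lipschitz reduction by approximation (again through $\Delta_2$) closes the argument.
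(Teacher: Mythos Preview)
Your treatment of (1) and (2) is exactly the paper's: Lemma~\ref{AuxLemma} plus Remark~\ref{RemDistFunc} on $(B_1,\Leb)$ and $(\partial B_1,\mathcal{H}^{N-1})$ respectively.

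For (3) the paper takes a much shorter route than your coarea--Jensen--perimeter scheme. It simply invokes \cite[Section~3]{Bramanti}, which under the stated perimeter hypothesis delivers
\[
\int_{B_1} G\!\left(\frac{|\nabla u^{\sharp}|}{\lambda_0}\right)\,dx \;\leq\; \int_{B_1} G(|\nabla u|)\,dx,\qquad \lambda_0=\frac{N\sqrt[N]{\omega_N}}{\gamma}.
\]
The assumption $\gamma\geq N\sqrt[N]{\omega_N}$ forces $0<\lambda_0\leq 1$, and then convexity of $G$ with $G(0)=0$ gives $G(|\nabla u^{\sharp}|)=G(\lambda_0\cdot|\nabla u^{\sharp}|/\lambda_0)\leq \lambda_0\,G(|\nabla u^{\sharp}|/\lambda_0)$, so integrating yields (3). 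That is the whole argument: the hypothesis is consumed only to make $\lambda_0\leq 1$, and the hard analysis (coarea, level-set geometry, spherical P\'olya--Szeg\H{o}) is entirely outsourced to Bramanti's theorem.

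Your direct approach, by contrast, has a genuine gap at the perimeter comparison. You assert that the symmetrized level sets satisfy $P_{B_1}(\{u^{\sharp}>t\})\leq N\sqrt[N]{\omega_N}\,\varrho_{u^{\sharp}}^{(N-1)/N}(t)$, but this is the \emph{Schwarz} relation (level sets are balls, equality in the isoperimetric inequality), not a spherical one: the sets $\{u^{\sharp}>t\}$ are unions of spherical caps over radii, and there is no reason their relative perimeter in $B_1$ should obey this bound. You correctly flag that $|\nabla u^{\sharp}|$ is not constant on level sets and defer to Sperner, but the same obstruction undermines the perimeter step, so the chain $P^{\sharp}(t)\leq N\sqrt[N]{\omega_N}\,\varrho^{(N-1)/N}\leq \gamma\,\varrho^{(N-1)/N}\leq P(t)$ does not close. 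Once you bring in the Sperner/Bramanti machinery to repair this, the coarea--Jensen scaffolding becomes redundant and you are back to the paper's two-line proof; so the cleaner fix is to cite \cite{Bramanti} for the $\lambda_0$-inequality and finish with the convexity trick above.
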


\begin{proof} The statements \eqref{Stet1} and \eqref{Stet2} hold by combining Lemma \ref{AuxLemma} and Remark \ref{RemDistFunc}.

For the last statement, from \cite[Section 3]{Bramanti} we know that
\begin{equation}\label{EqSymG-Orl}
   \displaystyle \int_{B_1} G\left(\frac{|\nabla u^{\sharp}|}{\lambda_0}\right)\, dx \leq \int_{B_1}G(|\nabla u|)\, dx,
\end{equation}
for a constant $\lambda_0 = \frac{N \sqrt[N]{\omega_N}}{\gamma}$. From assumption on $\gamma$ we obtain that $0<\lambda_0 \leq 1$. In this case, from the convexity of $G$ and \eqref{EqSymG-Orl} we obtain that
$$
 \displaystyle \int_{B_1} G(|\nabla u^{\sharp}|)\, dx \leq \lambda_0\int_{B_1} G\left(\frac{|\nabla u^{\sharp}|}{\lambda_0}\right)\, dx \leq \int_{B_1} G(|\nabla u|)\, dx,
$$
which concludes the proof.
\end{proof}

Finally, we will present the proof of our symmetrization result.

\begin{proof}[{\bf Proof of Theorem \ref{SymmetThm}}]
Firstly, for a fixed $\alpha\in (0,\mathcal{H}^{N-1}(\partial B_1))$ Theorem \eqref{ThmCharac} assures that there exists a profile $u_0\in \mathrm{X}_{\mathrm{W}}$ such that
$$
  \mathcal{H}^{N-1}(\{u_0=0\})=\alpha \qquad\text{and}\qquad  \S(\alpha)=\Phi_{G, B_1}(|\nabla u_0|)+\Phi_{G, B_1}(u_0).
$$
Now, let $u_0^{\sharp}$ be the spherical symmetrization of $u_0$. Notice that $u_0^{\sharp}$ is an admissible profile in the optimization process of $\S(\alpha)$, and by the Proposition \eqref{reluu*}
$$
\S(\alpha)\leq \Phi_{G, B_1}(|\nabla u_0^{\sharp}|)+\Phi_{G, B_1}( u_0^{\sharp} )\leq \Phi_{G, B_1}(|\nabla u_0|)+\Phi_{G,B_1}( u_0 )=\S(\alpha).
$$
Therefore,
\begin{equation}\label{sym}
  \S(\alpha)=\Phi_{G, B_1}(|\nabla u_0^{\sharp}|)+\Phi_{G, B_1}( u_0^{\sharp} ).
\end{equation}
Finally, since
$$
   \mathcal{H}^{N-1}(\mathrm{W}_0)  = \mathcal{H}^{N-1}(\{u_0=0\})= \alpha,
$$
where $ \mathrm{W}_0=\{x\in \partial B_1: u_0^{\sharp}(x)=0\}$,
we conclude by using \eqref{sym} that
$$
\S(\alpha)=S_{G,H}(\mathrm{W}_0),
$$
which assures that $\mathrm{W}_0$ is an optimal window. As a direct consequence we obtain the desired symmetry result, because the optimal window $\mathrm{W}_0$ is a spherical cap.
\end{proof}

\begin{remark}
At this point, the following question arises: if $\Omega$ is symmetric, does $\mathrm{A}_0$ (in the case of optimal interior hole) inherit the symmetry of the domain? The answer is positive  in some scenarios, for instance, as proved previously, if $\Omega = B_1$,  then  $\mathrm{A}_0$ is spherically symmetric (cf. \cite{BRW2}, \cite{daSDelPR}, \cite{DelPBN} and \cite{Den99} for similar results). However, for general configurations of the domain, $\mathrm{A}_0$ is not necessarily radially symmetric (cf. \cite{BLDR}).
\end{remark}

\subsection*{Acknowledgements}

This paper was supported by grants  PROICO 031906, UNSL, CONICET PIP
11220150100032CO and UBACyT 20020130100283BA. The authors would like
to thank Prof. Juli\'{a}n Fern\'{a}ndez Bonder for reading the draft
this manuscript and providing insightful comments. J.V. da Silva
would like to thank the Department of Mathematics and FCNyE from
Universidad de Buenos Aires for providing an excellent working
environment and scientific atmosphere during his Postdoctoral
program. J.V. da Silva thanks also to IMASL (CONICET) and Department
of Mathematics from Universidad Nacional de San Luis for their warm
hospitality and for fostering a pleasant scientific atmosphere
during his visit where part of this manuscript was written. J.V. da
Silva, A.M. Salort and A. Silva are members of CONICET.



\end{document}